\def\NAT@def@citea{\def\@citea{\NAT@separator}}
\theoremstyle{plain}
\newtheorem{theorem}{Theorem}[section]
\newtheorem{lemma}[theorem]{Lemma}
\theoremstyle{definition}
\theoremstyle{remark}
\newtheorem{remark}{Remark}
\begin{document}


\title{Closed Range Integral Operators  on Hardy, BMOA and Besov Spaces}

\author{
\name{Kostas Panteris\thanks{Contact Kostas Panteris  Email: mathp289@math.uoc.gr, kpanteris@yahoo.gr}}
\affil{Department of Mathematics and Applied Mathematics, University of Crete, University Campus Voutes, 70013 Heraklion, Greece }
}

\maketitle

\begin{abstract}
If $g\in H^{\infty}$, the integral operator $S_{g}$ on $H^{p}$, $BMOA$ and $B^{p}$(Besov) spaces, is defined as $S_{g}f(z)=\int_{0}^{z} f^{\prime}(w) g(w) dw$.  In this paper, we prove three necessary and sufficient conditions for the operator $S_{g}$ to have closed range on $H^{p}\hspace{1mm}(1 < p < \infty)$, $BMOA$ and $B^{p}\hspace{1mm}(1 < p < \infty)$.
\end{abstract}


\section{Introduction and Preliminaries}
 Let $\mathbb{D}$ denote the open unit disk in the complex plane, $\mathbb{T}$ the unit circle, $A$ the normalized area Lebesgue measure in $\mathbb{D}$ and $m$ the normalized length Lebesgue measure in $\mathbb{T}$. For $1 \leq p < \infty$ the Hardy space $H^{p}$ is defined as the set of all analytic functions $f$ in $\mathbb{D}$ for which
\[
\sup \limits_{0\leq r<1} \int \limits_{\mathbb{T}}  \vert f(r \zeta) \vert^{p} dm(\zeta) < +\infty
\] 
and the corresponding norm in $H^{p}$ is defined by
\[
\Vert f  \Vert_{H^{p}}^{p}  = \sup \limits_{0 \leq r<1} \int \limits_{\mathbb{T}}  \vert f(r \zeta) \vert^{p} dm(\zeta).
\]
When $p=\infty$, we define $H^{\infty}$ to be the space of bounded analytic functions $f$ in $\mathbb{D}$ and $\Vert f \Vert_{\infty} = \sup\lbrace \vert f(z)\vert:z\in\mathbb{D}\rbrace$.

In this work we will mainly make use of the following equivalent norm (see Calderon's theorem in \cite{Pavlovic}, page 213) in $H^{p}$, $1 \leq p < \infty$:
\begin{equation}\label{Stolz_norm}
\Vert f  \Vert_{H^{p}}^{p}  = \vert f(0)  \vert^{p} + \int \limits_{\mathbb{T}} \Big(\iint \limits_{\Gamma_{\beta}(\zeta)}  \vert f^{\prime}(z)\vert^{2} dA(z) \Big)^{\frac{p}{2}} dm(\zeta),
\end{equation}
where $\Gamma_{\beta}(\zeta)$ is the Stolz angle at $\zeta\in\mathbb{T}$, the conelike region with aperture $\beta\in(0,1)$, which is defined as
\[
\Gamma_{\beta}(\zeta) = \lbrace z\in\mathbb{D}: \vert z\vert < \beta\rbrace \cup \bigcup_{\vert z\vert< \beta}[z,\zeta).
\]
The BMOA space is defined as the set of all analytic functions $f$ in $\mathbb{D}$ for which
\[
\sup \limits_{\beta\in\mathbb{D}} \iint \limits_{\mathbb{D}} \frac{1-\vert\beta\vert^{2}}{\vert 1-\overline{\beta}z\vert^{2}} \vert f^{\prime}(z) \vert^{2} \log\frac{1}{\vert  z \vert} dA(z) <\infty
\]
and we may define the corresponding norm in BMOA by
\[
\Vert f  \Vert_{*}^{2}  = \vert f(0)  \vert^{2} + \sup \limits_{\beta\in\mathbb{D}} \iint \limits_{\mathbb{D}} \frac{1-\vert\beta\vert^{2}}{\vert 1-\overline{\beta}z\vert^{2}} \vert f^{\prime}(z) \vert^{2} \log\frac{1}{\vert  z \vert} dA(z).
\]
For $1 < p < \infty$ the Besov space $B^{p}$ is defined as the set of all analytic functions $f$ in $\mathbb{D}$ for which
\[
\iint \limits_{\mathbb{D}} \vert f^{\prime}(z) \vert^{p} (1-\vert z \vert^{2})^{p-2} dA(z) < +\infty 
\]
and the corresponding norm in $B^{p}$ is defined by
\[
\Vert f \Vert_{B^{p}}^{p} = \vert f(0) \vert^{p} + \iint \limits_{\mathbb{D}} \vert f^{\prime}(z) \vert^{p} (1-\vert z \vert^{2})^{p-2} dA(z).
\]
Let $g:\mathbb{D} \rightarrow \mathbb{C}$ be an analytic function. If $X$ is a space of analytic functions $f$ in $\mathbb{D}$ (in particular, in this paper, $X=H^{p}$ or $X=BMOA$ or $X=B^{p}$) then, the integral operator $S_{g}:X\rightarrow X$, induced by $g$, is defined as 
\[
S_{g}f(z) = \int_{0}^{z} f^{\prime}(w) g(w) dw, \hspace{2mm} z\in\mathbb{D},
\]
for every $f\in X$.

Let $\rho(z,w)$ denote the pseudo-hyberbolic distance between $z,w \in \mathbb{D}$,
\[
\rho(z,w) = \Big\vert \frac{z - w}{1 - \overline{z}w} \Big\vert,
\] 
 $D_{\eta}(a)$ denote the pseudo-hyberbolic disk of center $a \in \mathbb{D}$ and radius $\eta<1$:
\[
D_{\eta}(a) = \lbrace z \in \mathbb{D}: \rho(a,z) < \eta  \rbrace,
\]
and $\Delta_{\eta}(\alpha)$ denote the euclidean disk of center $a \in \mathbb{D}$ and radius $\eta(1-\vert \alpha\vert)$, $\eta<1$:
 \[
 \Delta_{\eta}(\alpha) = \lbrace z\in\mathbb{D}: \vert z - \alpha\vert< \eta(1-\vert \alpha\vert)  \rbrace.
 \]

In the following, $C$ denotes a positive and finite constant which may change from one occurrence to another. Moreover, by writing
$K(z) \asymp L(z)$ for the non-negative quantities $K(z)$ and $L(z)$ we mean that $K(z)$ is comparable to $L(z)$ if $z$ belongs to a specific set: there are positive constants 
 $C_{1}$ and $C_{2}$ independent of $z$ such that
\[
C_{1} K(z) \leq L(z) \leq C_{2} K(z).
\]

\section{Closed range integral operators on Hardy spaces}
Let $g:\mathbb{D} \rightarrow \mathbb{C}$ be an analytic function and, for $c>0$, let $G_{c} = \lbrace z\in\mathbb{D}:\vert g(z)\vert>c \rbrace$. It is well known (see \cite{Anderson}) that the integral operator $S_{g}:H^{p}\rightarrow H^{p}$ $(1\leq p < \infty)$ is bounded if and only if $g\in H^{\infty}$. 

We say that $S_{g}$, on $H^{p}$, is bounded below, if there is $C>0$ such that $\Vert S_{g}f\Vert_{H^{p}} > C \Vert f\Vert_{H^{p}}$ for every $f\in H^{p}$. Since $S_{g}$ maps every constant function to the $0$ function, if we want to study the property of being bounded below for $S_{g}$, we are obliged to consider spaces of analytic functions modulo the constants or, equivalently, spaces of analytic functions $f$ such that $f(0)=0$. Theorem 2.3 in \cite{Anderson} states that $S_{g}$ is bounded below on $H^{p}/\mathbb{C}$ if and only if  it has closed range on $H^{p}/\mathbb{C}$. Next, we denote $H^{p}/\mathbb{C}$ as $H_{0}^{p}$.

Corollary 3.6 n \cite{Anderson} states that
$S_{g}:H_{0}^{2}\rightarrow H_{0}^{2}$ has closed range if and only if
there exist $c>0$, $\delta > 0$ and $\eta \in (0,1)$ such that
\[
A(G_{c} \cap D_{\eta}(a)) \geq \delta A(D_{\eta}(a))
\]
for all $a \in \mathbb{D}$.

In the end of \cite{Anderson} A. Anderson posed the question, if the above condition for $H_{0}^{2}$  holds also for all $H_{0}^{p}$. In this paper, theorem \ref{integral_theorem} gives an affirmative answer to this question, for the case $1 < p < \infty$. Although the answer in case $p=2$ is an immediate consequence of D. Luecking's theorem (see \cite{Anderson}, Proposition 3.5), the answer in case $1 < p < \infty$ requires much more effort.

For $\lambda\in(0,1)$ and $f\in H^{p}$ we set 
$$
E_{\lambda}(\alpha) = \lbrace z\in \Delta_{\eta}(\alpha): \vert f^{\prime}(z) \vert^{2} > \lambda \vert f^{\prime}(\alpha)\vert^{2}  \rbrace
$$ 
and
$$
B_{\lambda}f(\alpha) = \frac{1}{A(E_{\lambda}(\alpha))} \iint \limits_{E_{\lambda}(\alpha)} \vert f^{\prime}(z)\vert^{2} dA(z).
$$
Lemma \ref{Luecking_lemma1} is due to D. Luecking (see \cite{Luecking81}, lemma 1).
\begin{lemma}\label{Luecking_lemma1}
Let $f$ analytic in $\mathbb{D}$, $a\in\mathbb{D}$ and $\lambda\in(0,1)$. Then
\[
\frac{A(E_{\lambda}(\alpha))}{A(\Delta_{\eta}(\alpha))} \geq \frac{\log\frac{1}{\lambda}}{\log\frac{B_{\lambda}f(\alpha)}{\vert f^{\prime}(\alpha) \vert^{2}} + \log\frac{1}{\lambda}}.
\]
\end{lemma}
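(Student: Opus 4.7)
The plan is to combine the sub-mean value property of $\log|f'|^{2}$ with Jensen's inequality applied to the subset where $|f'|$ is large. Since $f'$ is holomorphic in $\mathbb{D}$, the function $\log|f'|^{2}$ is subharmonic, and because $\Delta_{\eta}(\alpha)$ is a euclidean disk centered at $\alpha$, the sub-mean value inequality yields
\[
\log|f'(\alpha)|^{2} \leq \frac{1}{A(\Delta_{\eta}(\alpha))} \iint_{\Delta_{\eta}(\alpha)} \log|f'(z)|^{2} \, dA(z).
\]
This is the only complex-analytic ingredient of the proof; everything else is soft.

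Next, I would split the integral on the right along the partition $\Delta_{\eta}(\alpha) = E_{\lambda}(\alpha) \cup (\Delta_{\eta}(\alpha) \setminus E_{\lambda}(\alpha))$. On the complement $\Delta_{\eta}(\alpha) \setminus E_{\lambda}(\alpha)$, the defining inequality gives $|f'(z)|^{2} \leq \lambda |f'(\alpha)|^{2}$, so the integrand is pointwise bounded by $\log\lambda + \log|f'(\alpha)|^{2}$. On $E_{\lambda}(\alpha)$, I would apply Jensen's inequality in the form
\[
\frac{1}{A(E_{\lambda}(\alpha))}\iint_{E_{\lambda}(\alpha)} \log|f'(z)|^{2} \, dA(z) \leq \log\!\left(\frac{1}{A(E_{\lambda}(\alpha))}\iint_{E_{\lambda}(\alpha)} |f'(z)|^{2} \, dA(z)\right) = \log B_{\lambda}f(\alpha),
\]
which is where the quantity $B_{\lambda}f(\alpha)$ enters.

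Denoting $t = A(E_{\lambda}(\alpha))/A(\Delta_{\eta}(\alpha))$, assembling the three bounds converts the sub-mean value inequality into
\[
\log|f'(\alpha)|^{2} \leq t \log B_{\lambda}f(\alpha) + (1-t)\bigl(\log\lambda + \log|f'(\alpha)|^{2}\bigr).
\]
Cancelling $(1-t)\log|f'(\alpha)|^{2}$ from both sides, rearranging, and dividing by $\log(B_{\lambda}f(\alpha)/|f'(\alpha)|^{2}) + \log(1/\lambda)$ (which is positive since $B_{\lambda}f(\alpha) \geq \lambda |f'(\alpha)|^{2}$ by definition of $E_{\lambda}(\alpha)$) gives precisely the claimed lower bound for $t$.

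The main obstacle is really only bookkeeping: one has to verify that the algebraic manipulation is valid, i.e.\ that the denominator is strictly positive, and to dispense with the degenerate cases. If $f'(\alpha) = 0$ then $E_{\lambda}(\alpha) = \Delta_{\eta}(\alpha)$ so the ratio is $1$ and the right-hand side is at most $1$; if $f'$ vanishes identically on a set of positive measure inside $E_{\lambda}(\alpha)$ the argument is unaffected, while the possibility $\log|f'(\alpha)|^{2} = -\infty$ makes the sub-mean value inequality trivially true and the conclusion degenerate in the same way. Apart from these edge cases, the proof is a direct three-line computation once subharmonicity and Jensen are in place.
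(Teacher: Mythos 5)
Your argument is correct: subharmonicity of $\log\vert f^{\prime}\vert^{2}$ on the euclidean disk $\Delta_{\eta}(\alpha)$ centered at $\alpha$, the split of the mean over $E_{\lambda}(\alpha)$ and its complement, Jensen's inequality on $E_{\lambda}(\alpha)$, and the final rearrangement (with the denominator strictly positive because the inequality defining $E_{\lambda}(\alpha)$ is strict) yield exactly the stated bound, and your treatment of the degenerate cases is adequate. The paper offers no proof of this lemma, citing it as Lemma 1 of Luecking's \emph{Inequalities on Bergman spaces}; what you have written is essentially Luecking's original argument, so the two approaches coincide.
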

Moreover in \cite{Luecking81}, the following sentence is proved: If $\alpha\in\mathbb{D}$ and $\frac{2\eta}{1+\eta^{2}}\leq r<1$ then
\begin{equation}\label{euclidean_to_pseudo}
\Delta_{\eta}(\alpha) \subseteq D_{r}(\alpha).
\end{equation}
We proceed with the main result of this section.
\begin{theorem}\label{integral_theorem}
Let $1 < p < \infty$ and $g\in H^{\infty}$. Then the following are equivalent:
\begin{enumerate}
\item[(i)] $S_{g}:H_{0}^{p}\rightarrow H_{0}^{p}$ has closed range 
\item[(ii)] There exist $c>0$, $\delta > 0$ and $\eta \in (0,1)$ such that
\begin{equation}\label{second_part}
A(G_{c} \cap D_{\eta}(a)) \geq \delta A(D_{\eta}(a))
\end{equation}
for all $a \in \mathbb{D}$.
\item[(iii)] There exist $c>0$, $\delta > 0$ and $\eta \in (0,1)$ such that
\begin{equation}\label{second_part2}
A(G_{c} \cap \Delta_{\eta}(a)) \geq \delta A(\Delta_{\eta}(a))
\end{equation}
for all $a \in \mathbb{D}$.
\end{enumerate}
\end{theorem}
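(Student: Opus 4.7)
The plan is to prove the cycle (iii)$\Rightarrow$(i)$\Rightarrow$(iii) together with the geometric equivalence (ii)$\Leftrightarrow$(iii). The latter is essentially routine: every pseudo-hyperbolic disk $D_\eta(a)$ is itself a Euclidean disk whose radius is comparable to $(1-|a|)$, so it sits between $\Delta_\eta(a)$ and $\Delta_{\eta'}(a)$ for a suitable $\eta'$, while the reverse inclusion is already provided by (\ref{euclidean_to_pseudo}). Since $A(D_\eta(a))\asymp A(\Delta_\eta(a))\asymp(1-|a|)^2$, the two reverse-Carleson conditions (\ref{second_part}) and (\ref{second_part2}) are equivalent after readjusting $c$, $\delta$ and $\eta$.

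For (iii)$\Rightarrow$(i) I work with the Calderon norm (\ref{Stolz_norm}) and the identity $(S_g f)'(z)=f'(z)g(z)$. It suffices to prove
\begin{equation*}
\int_{\mathbb{T}}\Big(\iint_{\Gamma_\beta(\zeta)}|f'|^2\,dA\Big)^{p/2}dm\;\lesssim\;\int_{\mathbb{T}}\Big(\iint_{\Gamma_{\beta'}(\zeta)}|f'|^2|g|^2\,dA\Big)^{p/2}dm
\end{equation*}
for some $\beta'>\beta$, after which the standard aperture-independence of the area-function norm closes the argument. The key is a local estimate: fix a small $\lambda\in(0,1)$ and set $R_0=\lambda^{-\delta/(1-\delta)}/2$, and call $\alpha\in\mathbb{D}$ \emph{good} if $B_\lambda f(\alpha)\leq R_0|f'(\alpha)|^2$. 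On good $\alpha$, Lemma \ref{Luecking_lemma1} yields $A(E_\lambda(\alpha))\geq\epsilon A(\Delta_\eta(\alpha))$ for an $\epsilon$ with $\epsilon+\delta>1$; combining with (\ref{second_part2}) gives $A(G_c\cap E_\lambda(\alpha))\geq(\epsilon+\delta-1)A(\Delta_\eta(\alpha))$, and together with the elementary upper bound $\iint_{\Delta_\eta(\alpha)}|f'|^2\,dA\leq(\lambda+R_0)|f'(\alpha)|^2A(\Delta_\eta(\alpha))$ (from splitting over $E_\lambda(\alpha)$ and its complement) this produces the pointwise comparison
\begin{equation*}
\iint_{\Delta_\eta(\alpha)}|f'|^2\,dA\;\lesssim\;\iint_{\Delta_\eta(\alpha)}|f'|^2|g|^2\,dA.
\end{equation*}
Integrating in $\alpha$ over the good part of $\Gamma_\beta(\zeta)$ and swapping the order of integration by Fubini (the set $\{\alpha:z\in\Delta_\eta(\alpha)\}$ is a Euclidean disk at $z$ of scale $1-|z|$, and lies in $\Gamma_{\beta'}(\zeta)$ whenever $z\in\Gamma_\beta(\zeta)$) controls the good contribution. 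The bad points are absorbed by a stopping-time argument: each bad $\alpha$ has some nearby $\beta\in E_\lambda(\alpha)$ with $|f'(\beta)|^2\geq R_0|f'(\alpha)|^2$, and iterating terminates at good points whose Vitali-type cover has bounded overlap.

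For (i)$\Rightarrow$(iii) I use the test-function family
\begin{equation*}
f_a(z)=(1-|a|^2)^\sigma\int_0^z(1-\bar a w)^{-\sigma-1}\,dw,\qquad|a|\geq 1/2,
\end{equation*}
with $\sigma>1/p$ fixed. A direct Calderon-norm computation gives $\|f_a\|_{H^p}\asymp(1-|a|^2)^{1/p}$ and $|f_a'(z)|^2\asymp(1-|a|^2)^{-2}$ on $\Delta_\eta(a)$. By Theorem 2.3 of \cite{Anderson}, closed range is equivalent to bounded below on $H_0^p$, so $\|S_g f_a\|_{H^p}\gtrsim\|f_a\|_{H^p}$. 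Lower-bounding the area-norm of $S_g f_a$ by restricting the Stolz integral to $\Delta_\eta(a)$ and the boundary integration to the arc of those $\zeta$ whose Stolz region contains $\Delta_\eta(a)$ (an arc of length $\asymp 1-|a|$) yields $\iint_{\Delta_\eta(a)}|g|^2\,dA\gtrsim A(\Delta_\eta(a))$; a Chebyshev split $\iint_{\Delta_\eta(a)}|g|^2\,dA\leq\|g\|_\infty^2 A(G_c\cap\Delta_\eta(a))+c^2 A(\Delta_\eta(a))$ with $c$ small enough then delivers (iii) for $|a|\geq 1/2$, the small-$|a|$ case being routine. The principal obstacle is the bad-$\alpha$ treatment in (iii)$\Rightarrow$(i): for $p=2$ this collapses to Luecking's $L^2$ Carleson-embedding theorem, but for $p\neq 2$ the stopping-time/Vitali cover must be pushed through the quasi-triangle inequality in $L^{p/2}(\mathbb{T})$, which is the technical heart of the argument.
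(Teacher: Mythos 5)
Your architecture tracks the paper's quite closely — (ii)$\Leftrightarrow$(iii) geometrically, (iii)$\Rightarrow$(i) through the Calderon square-function norm with Luecking's Lemma \ref{Luecking_lemma1} on the ``good'' points, (i)$\Rightarrow$(iii) through M\"obius-type test functions — and your good-point analysis (the measure estimate for $E_\lambda(\alpha)$, the pointwise comparison on $\Delta_\eta(\alpha)$, the Fubini swap) is essentially the paper's. But there are two genuine gaps. The first is the bad points in (iii)$\Rightarrow$(i), which you yourself flag as ``the technical heart'' and leave unresolved. Your stopping-time claim — each bad $\alpha$ has a nearby point where $|f'|^2$ jumps by the factor $R_0$, and ``iterating terminates at good points'' — is not justified: the iterates live in successive disks $\Delta_\eta(\alpha_j)$ and may drift to the boundary, where $|f'|$ is unbounded for a general $f\in H^p$, so termination is unclear; and even granting a bounded-overlap Vitali cover, you have said nothing about how the resulting local estimates are summed inside the outer $L^{p}(\mathbb{T})$ integration. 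The paper resolves this completely and differently: Lemmas \ref{lemma_2} and \ref{lemma_3} prove the uniform-in-$\zeta$ bound $\iint_{B\cap\Gamma_\beta(\zeta)}|f'|^2\,dA\le\varepsilon C\iint_{\Gamma_{\beta'}(\zeta)}|f'|^2\,dA$ (the key step being a Cauchy-integral/subharmonicity argument showing $A(E_\lambda(\alpha))\gtrsim\varepsilon^2A(\Delta_\eta(\alpha))$ for $\alpha\notin A$, so the $\varepsilon^3$ in the definition of $B$ wins), and then absorbs this small term by taking square roots pointwise before integrating over $\mathbb{T}$ and applying Minkowski's inequality in $L^p(\mathbb{T})$ together with the aperture-independence \eqref{equiv_stolz_norms}. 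That square-root trick also disposes of your worry about the quasi-triangle inequality in $L^{p/2}$ for $1<p<2$: it is never needed.

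The second gap is in (i)$\Rightarrow$(iii), where the logic as written is inverted and the tail estimates are missing. Closed range gives a \emph{lower} bound $\|S_gf_a\|_{H^p}\gtrsim(1-|a|)^{1/p}$, and restricting the Stolz integral to $\Delta_\eta(a)$ and the boundary integral to $I_a$ produces another \emph{lower} bound for $\|S_gf_a\|$; two lower bounds cannot yield $\iint_{\Delta_\eta(a)}|g|^2\,dA\gtrsim A(\Delta_\eta(a))$. What is required is an \emph{upper} bound for $\|S_gf_a\|_{H^p}^p$ whose main term is the local contribution from $G_c\cap D_\eta(a)$ and whose remainder is at most $C\big(c^p+(1-\eta)^{\text{pos.}}\big)(1-|a|)$, so that choosing $\eta$ close to $1$ and then $c$ small forces the local term to carry the mass. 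This is exactly the paper's decomposition into $I_1+I_2+I_3+I_4$, and the estimate of $I_4$ — the integral over $\mathbb{T}\setminus I_a$, obtained from the dyadic decomposition $\Omega_j$ of the Stolz angle and the resulting bound $\iint_{\Gamma_{1/2}(\zeta)}|\psi_a'|^2\,dA\asymp(1-|a|^2)^2+(1-|a|^2)^2|\mathrm{Arg}(\zeta)|^{-2}$ — is the bulk of the work in this direction and is entirely absent from your sketch. Your test functions $f_a$ (a $\sigma$-parameter generalization of the paper's $\psi_a-\alpha$) would serve, but the same tail analysis must be carried out for them.
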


We first prove two lemmas which will play an important role in the proof of theorem \ref{integral_theorem}. 

For $\zeta\in\mathbb{T}$ and $0<\beta<\beta^{\prime}<1$ we consider the Stolz angles $\Gamma_{\beta}(\zeta)$ and $\Gamma_{\beta^{\prime}}(\zeta)$, where $\beta^{\prime}$ has been chosen so that $\Delta_{\eta}(\alpha) \subset \Gamma_{\beta^{\prime}}(\zeta)$ for every $\alpha\in\Gamma_{\beta}(\zeta)$.

 \begin{lemma}\label{lemma_2}
  Let $\varepsilon>0$, $f$ analytic in $\mathbb{D}$ and
 \[
 A = \Big\lbrace \alpha\in\mathbb{D}: \vert f^{\prime}(\alpha)\vert^{2} < \frac{\varepsilon}{A(\Delta_{\eta}(\alpha))} \iint \limits_{\Delta_{\eta}(\alpha)} \vert f^{\prime}(z)\vert^{2} dA(z) \Big\rbrace.
 \]
 There is $C>0$ depending only on $\eta$ such that
 \[
 \iint \limits_{A\cap\Gamma_{\beta}(\zeta)} \vert f^{\prime}(z)\vert^{2} dA(z) \leq \varepsilon C \iint \limits_{\Gamma_{\beta^{\prime}}(\zeta)} \vert f^{\prime}(z)\vert^{2} dA(z)
 \]
 \end{lemma}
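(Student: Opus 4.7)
My plan is to start from the defining inequality of the set $A$, integrate it over $A\cap\Gamma_{\beta}(\zeta)$, and then apply Fubini's theorem to change the order of integration. For $\alpha\in A$, the definition gives
\[
|f'(\alpha)|^{2}A(\Delta_{\eta}(\alpha)) < \varepsilon \iint_{\Delta_{\eta}(\alpha)} |f'(z)|^{2}\,dA(z) = \varepsilon \iint_{\mathbb{D}} |f'(z)|^{2} \chi_{\Delta_{\eta}(\alpha)}(z)\,dA(z).
\]
Dividing by $A(\Delta_{\eta}(\alpha))$ and integrating over $\alpha\in A\cap\Gamma_{\beta}(\zeta)$, then swapping the order of integration, I obtain
\[
\iint_{A\cap\Gamma_{\beta}(\zeta)} |f'(\alpha)|^{2}\,dA(\alpha) \leq \varepsilon \iint_{\mathbb{D}} |f'(z)|^{2} \Bigl(\iint_{\{\alpha\in A\cap\Gamma_{\beta}(\zeta):\, z\in\Delta_{\eta}(\alpha)\}} \frac{dA(\alpha)}{A(\Delta_{\eta}(\alpha))}\Bigr)\,dA(z).
\]

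Next I would exploit the two geometric observations: (a) by the choice of $\beta'$, if $\alpha\in\Gamma_{\beta}(\zeta)$ and $z\in\Delta_{\eta}(\alpha)$, then $z\in\Gamma_{\beta'}(\zeta)$, so the outer integral may be restricted to $\Gamma_{\beta'}(\zeta)$; and (b) for any $\alpha,z\in\mathbb{D}$ with $z\in\Delta_{\eta}(\alpha)$, i.e. $|z-\alpha|<\eta(1-|\alpha|)$, the triangle inequality produces
\[
(1-\eta)(1-|\alpha|) < 1-|z| < (1+\eta)(1-|\alpha|),
\]
so that $1-|\alpha|\asymp 1-|z|$ with constants depending only on $\eta$. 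In particular $A(\Delta_{\eta}(\alpha))=\pi\eta^{2}(1-|\alpha|)^{2}\asymp(1-|z|)^{2}$, and the condition $z\in\Delta_{\eta}(\alpha)$ confines $\alpha$ to a Euclidean disk of radius comparable to $1-|z|$ about $z$.

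Combining these two facts, the inner integral satisfies
\[
\iint_{\{\alpha:\,z\in\Delta_{\eta}(\alpha)\}} \frac{dA(\alpha)}{A(\Delta_{\eta}(\alpha))} \leq \frac{C_1 (1-|z|)^{2}}{C_2 (1-|z|)^{2}} = C
\]
for a constant $C$ depending only on $\eta$. Plugging this bound back into the Fubini inequality yields
\[
\iint_{A\cap\Gamma_{\beta}(\zeta)} |f'(\alpha)|^{2}\,dA(\alpha) \leq \varepsilon C \iint_{\Gamma_{\beta'}(\zeta)} |f'(z)|^{2}\,dA(z),
\]
which is the desired conclusion.

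The main (though still routine) obstacle is the uniform estimate on the inner integral in step two: one must be careful that the constants coming from $1-|\alpha|\asymp 1-|z|$ depend only on $\eta$ and not on $\zeta$ or $\beta,\beta'$, so that the resulting $C$ is truly independent of the Stolz angle being considered. Everything else is a direct application of Fubini and of the inclusion $\Delta_{\eta}(\alpha)\subset\Gamma_{\beta'}(\zeta)$ guaranteed by hypothesis.
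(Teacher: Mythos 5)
Your proposal is correct and follows essentially the same route as the paper: integrate the defining inequality of $A$, apply Fubini, and bound the inner integral by a constant depending only on $\eta$ using $1-|\alpha|\asymp 1-|z|$ for $z\in\Delta_{\eta}(\alpha)$. The only cosmetic difference is that the paper symmetrizes via the inclusion $\Delta_{\eta}(\alpha)\subseteq D_{r}(\alpha)$ and the identity $\chi_{D_{r}(\alpha)}(z)=\chi_{D_{r}(z)}(\alpha)$, whereas you confine $\alpha$ directly to a Euclidean disk about $z$ of radius comparable to $1-|z|$; both yield the same uniform bound.
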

 \begin{proof}
 Integrating
 \[
  \vert f^{\prime}(\alpha)\vert^{2} < \frac{\varepsilon}{A(\Delta_{\eta}(\alpha))} \iint \limits_{\Delta_{\eta}(\alpha)} \vert f^{\prime}(z)\vert^{2} dA(z)
 \]
over $\alpha\in A\cap\Gamma_{\beta}(\zeta)$ and using Fubini's theorem on the right side, we get
 \[
 \iint \limits_{A\cap\Gamma_{\beta}(\zeta)} \vert f^{\prime}(\alpha)\vert^{2} dA(\alpha) < \varepsilon \iint \limits_{\Gamma_{\beta^{\prime}}(\zeta)} \vert f^{\prime}(z)\vert^{2} \Big[\iint \limits_{A\cap\Gamma_{\beta}(\zeta)} \frac{\chi_{\Delta_{\eta}(\alpha)}(z)}{A(\Delta_{\eta}(\alpha))} dA(\alpha)\Big] dA(z)
 \]
 Using \eqref{euclidean_to_pseudo} with $r=\frac{2\eta}{1+\eta^{2}}$, we have $\chi_{\Delta_{\eta}(\alpha)}(z) \leq \chi_{D_{r}(\alpha)}(z)=\chi_{D_{r}(z)}(\alpha)$. We have that $A(D_{r}(z))\asymp (1-\vert z\vert)^{2}$ and, for $\alpha\in D_{\eta}(z)$, we have $(1-\vert z\vert) \asymp (1-\vert \alpha\vert)$, where the underlying constants in these relations depend only on $\eta$. In addition, $A(\Delta_{\eta}(\alpha)) = \eta^{2}(1-\vert \alpha\vert)^{2}$.
 So, 
 \begin{align}\label{brackets_int}
 \iint \limits_{A\cap\Gamma_{\beta}(\zeta)} \frac{\chi_{\Delta_{\eta}(\alpha)}(z)}{A(\Delta_{\eta}(\alpha))}    dA(\alpha) & \leq \iint \limits_{A\cap\Gamma_{\beta}(\zeta)} \frac{\chi_{D_{r}(z)}(\alpha)}{\eta^{2}(1-\vert \alpha\vert)^{2}} dA(\alpha)\nonumber\\ 
 & \leq C \iint \limits_{D_{r}(z)} \frac{1}{\eta^{2}(1-\vert z\vert)^{2}} dA(\alpha) = C \frac{A(D_{r}(z))}{\eta^{2}(1-\vert z\vert)^{2}} \leq C,
\end{align}
where $C>0$ depends only on $\eta$.
 \end{proof}
 
 \begin{lemma}\label{lemma_3}
 Let $0<\varepsilon<1$, $f$ analytic in $\mathbb{D}$, $0<\lambda<\frac{1}{2}$ and
 \[
 B = \Big\lbrace \alpha\in\mathbb{D}: \vert f^{\prime}(\alpha)\vert^{2} < \varepsilon^{3} B_{\lambda}f(\alpha) \Big\rbrace.
 \]
 There is $C>0$ depending only on $\eta$ such that
 \[
 \iint \limits_{B\cap \Gamma_{\beta}(\zeta)} \vert f^{\prime}(z)\vert^{2} dA(z) \leq \varepsilon C \iint \limits_{\Gamma_{\beta^{\prime}}(\zeta)} \vert f^{\prime}(z)\vert^{2} dA(z)
 \]
 \end{lemma}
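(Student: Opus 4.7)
The plan is to reduce Lemma~\ref{lemma_3} to Lemma~\ref{lemma_2} via a dyadic decomposition of $B$ controlled by two ingredients: the trivial bound $B_{\lambda}f(\alpha)\,A(E_{\lambda}(\alpha))\leq \iint_{\Delta_{\eta}(\alpha)}|f'|^{2}\,dA$ (since $E_{\lambda}(\alpha)\subseteq\Delta_{\eta}(\alpha)$), and Lemma~\ref{Luecking_lemma1}, which becomes effective precisely when $A(E_{\lambda}(\alpha))$ is much smaller than $A(\Delta_{\eta}(\alpha))$. Writing $M(\alpha)=\frac{1}{A(\Delta_{\eta}(\alpha))}\iint_{\Delta_{\eta}(\alpha)}|f'|^{2}\,dA$, the goal is to split $B=\bigsqcup_{k\geq 0}B^{(k)}$ so that $|f'(\alpha)|^{2}\leq t_{k}M(\alpha)$ on $B^{(k)}$ with $\sum_{k}t_{k}=O(\varepsilon)$; Lemma~\ref{lemma_2} applied piece by piece then yields the claim.

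Concretely, set $B^{(0)}=\{\alpha\in B:A(E_{\lambda}(\alpha))\geq A(\Delta_{\eta}(\alpha))/2\}$ and, for $k\geq 1$,
\[
B^{(k)}=\Big\{\alpha\in B:\tfrac{1}{2^{k+1}}\leq \tfrac{A(E_{\lambda}(\alpha))}{A(\Delta_{\eta}(\alpha))}<\tfrac{1}{2^{k}}\Big\}.
\]
On $B^{(k)}$, the trivial bound combined with $|f'(\alpha)|^{2}<\varepsilon^{3}B_{\lambda}f(\alpha)$ yields
\[
|f'(\alpha)|^{2}<\varepsilon^{3}\frac{A(\Delta_{\eta}(\alpha))}{A(E_{\lambda}(\alpha))}M(\alpha)\leq 2^{k+1}\varepsilon^{3}M(\alpha).
\]
For $k\geq 1$ I rearrange Lemma~\ref{Luecking_lemma1} to
\[
\frac{B_{\lambda}f(\alpha)}{|f'(\alpha)|^{2}}\geq \Big(\frac{1}{\lambda}\Big)^{A(\Delta_{\eta}(\alpha))/A(E_{\lambda}(\alpha))-1}\geq \Big(\frac{1}{\lambda}\Big)^{2^{k}-1},
\]
which supplies the sharper bound $|f'(\alpha)|^{2}\leq \lambda^{2^{k}-1}B_{\lambda}f(\alpha)\leq 2^{k+1}\lambda^{2^{k}-1}M(\alpha)$. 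Taking $t_{k}=2^{k+1}\min(\varepsilon^{3},\lambda^{2^{k}-1})$ and invoking Lemma~\ref{lemma_2} on $B^{(k)}$ (with parameter $t_{k}$) delivers $\iint_{B^{(k)}\cap\Gamma_{\beta}(\zeta)}|f'|^{2}\,dA\leq t_{k}C\iint_{\Gamma_{\beta'}(\zeta)}|f'|^{2}\,dA$.

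The main obstacle is the estimate $\sum_{k\geq 0}t_{k}=O(\varepsilon)$. Let $K$ be the largest integer with $\lambda^{2^{K}-1}\geq \varepsilon^{3}$, so $2^{K}\leq 1+3\log(1/\varepsilon)/\log(1/\lambda)$. For $k\leq K$ one uses $t_{k}\leq 2^{k+1}\varepsilon^{3}$ and obtains $\sum_{k=0}^{K}t_{k}\leq C\,2^{K}\varepsilon^{3}\leq C'\varepsilon^{3}\log(1/\varepsilon)\leq C''\varepsilon$, since $x^{2}\log(1/x)$ is bounded on $(0,1)$. For $k>K$ the bound $t_{k}=2^{k+1}\lambda^{2^{k}-1}$ applies, and because $\lambda<1/2$ the ratio $t_{k+1}/t_{k}=2\lambda^{2^{k}}$ becomes vanishingly small; the tail is therefore comparable to its first term $2^{K+2}\lambda^{2^{K+1}-1}\leq C\varepsilon^{3}\log(1/\varepsilon)$, again $O(\varepsilon)$. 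The crude dyadic bound alone would produce a divergent geometric series in $k$; it is the super-exponential gain from Lemma~\ref{Luecking_lemma1} in the small-$A(E_{\lambda}(\alpha))$ regime that makes the series close. The final constant depends on $\eta$ (through Lemma~\ref{lemma_2}) and on the fixed parameter $\lambda$.
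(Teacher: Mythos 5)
Your proof is correct, but it takes a genuinely different route from the paper's. The paper splits $B\cap\Gamma_{\beta}(\zeta)$ into $B\cap\Gamma_{\beta}(\zeta)\cap A$ and $(B\cap\Gamma_{\beta}(\zeta))\setminus A$, with $A$ the exceptional set of Lemma \ref{lemma_2}; the first piece is handled by that lemma, and on the complement of $A$ a Cauchy-integral/subharmonicity estimate (using \eqref{notA} to compare the mean of $\vert f^{\prime}\vert^{2}$ over $\Delta_{\eta}(\alpha)$ with $\vert f^{\prime}(\alpha)\vert^{2}$) yields the pointwise lower bound $A(E_{\lambda}(\alpha))\geq\frac{\varepsilon^{2}}{4C^{2}}A(\Delta_{\eta}(\alpha))$, so the Fubini computation costs a factor $\varepsilon^{-2}$ that is exactly absorbed by the $\varepsilon^{3}$ in the definition of $B$. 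You instead decompose $B$ dyadically according to the ratio $A(E_{\lambda}(\alpha))/A(\Delta_{\eta}(\alpha))$, place each piece inside a Lemma \ref{lemma_2}-type set with parameter $t_{k}$, and close the series by rearranging Lemma \ref{Luecking_lemma1} into the super-exponential bound $\vert f^{\prime}(\alpha)\vert^{2}\leq\lambda^{2^{k}-1}B_{\lambda}f(\alpha)$ — a rearrangement I checked and which is valid. Your route avoids the Cauchy-formula argument entirely, reusing Lemma \ref{Luecking_lemma1} (which the paper invokes only later, in the proof of Theorem \ref{integral_theorem}), and it is quantitatively sharper: you only need $\varepsilon^{a}\log(1/\varepsilon)=O(\varepsilon)$, so any exponent $a>1$ in the definition of $B$ would do, whereas the paper's $\varepsilon^{-2}$ loss genuinely requires the exponent $3$. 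Two cosmetic points: the inequalities defining your $B^{(k)}$-memberships are non-strict, so you should invoke Lemma \ref{lemma_2} with parameter $2t_{k}$ (or note its proof works with $\leq$); and since $\lambda<\frac{1}{2}$ gives $\log(1/\lambda)>\log 2$ and tail ratio $2\lambda^{2^{k}}<\frac{1}{2}$, your constant is in fact uniform in $\lambda$, matching the statement that $C$ depends only on $\eta$.
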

 \begin{proof}
 We write
 \[
 \iint \limits_{B\cap \Gamma_{\beta}(\zeta)} \vert f^{\prime}(\alpha)\vert^{2} dA(\alpha) = \iint \limits_{B\cap \Gamma_{\beta}(\zeta)\cap A} \vert f^{\prime}(\alpha)\vert^{2} dA(\alpha) + \iint \limits_{(B\cap \Gamma_{\beta}(\zeta))\setminus A} \vert f^{\prime}(\alpha)\vert^{2} dA(\alpha),
 \]
 where $A$ is as in lemma \ref{lemma_2}.
 The first integral is estimated by lemma \ref{lemma_2}, so it remains to show the desired result for the second integral. Integrating the relation
 \[
 \vert f^{\prime}(\alpha)\vert^{2} < \varepsilon^{3} B_{\lambda}f(\alpha) = \varepsilon^{3} \frac{1}{A(E_{\lambda}(\alpha))} \iint \limits_{E_{\lambda}(\alpha)} \vert f^{\prime}(z)\vert^{2} dA(z)
 \]
 over the set $(B\cap \Gamma_{\beta}(\zeta))\setminus A$ and using Fubini's theorem on the right side,  we get
 \begin{align}\label{epsilon_lambda}
 \iint \limits_{(B\cap \Gamma_{\beta}(\zeta))\setminus A}  \vert f^{\prime}(\alpha)\vert^{2}  dA(\alpha) &  \leq \varepsilon^{3} \iint \limits_{\Gamma_{\beta^{\prime}}(\zeta)} \vert f^{\prime}(z)\vert^{2} \Big[\iint \limits_{(B\cap \Gamma_{\beta}(\zeta))\setminus A} \frac{1}{A(E_{\lambda}(\alpha))} \chi_{E_{\lambda}(\alpha)}(z) dA(\alpha)\Big] dA(z)\nonumber\\
 & \leq \varepsilon^{3} \iint \limits_{\Gamma_{\beta^{\prime}}(\zeta)} \vert f^{\prime}(z)\vert^{2} \Big[\iint \limits_{(B\cap \Gamma_{\beta}(\zeta))\setminus A} \frac{1}{A(E_{\lambda}(\alpha))} \chi_{\Delta_{\eta}(\alpha)}(z) dA(\alpha)\Big] dA(z)
 \end{align}
 where the last inequality is justified by $E_{\lambda}(\alpha) \subseteq \Delta_{\eta}(\alpha)$. Let  $\alpha \not\in A$, i.e.
 \begin{equation}\label{notA}
 \vert f^{\prime}(\alpha)\vert^{2} \geq \frac{\varepsilon}{A(\Delta_{\eta}(\alpha))} \iint \limits_{\Delta_{\eta}(\alpha)} \vert f^{\prime}(z)\vert^{2} dA(z).
 \end{equation}
 Set $r=\eta(1 - \vert \alpha \vert)$ and suppose $\lambda<\frac{1}{2}$ and $\vert z-\alpha \vert < \frac{r}{4}$. We have that
 \begin{align}\label{Cauchy}
 \vert f^{\prime}(z)^{2} - f^{\prime}(\alpha)^{2}  \vert & = \frac{1}{2\pi} \Bigg\vert \int \limits_{\vert w-\alpha\vert = \frac{r}{2}} f^{\prime}(w)^{2} \Bigg(\frac{1}{w-z} - \frac{1}{w-\alpha}  \Bigg) dw \Bigg\vert \nonumber\\
 & = \frac{1}{2\pi} \Bigg\vert\int \limits_{\vert w-\alpha\vert = \frac{r}{2}}  f^{\prime}(w)^{2} \frac{z-\alpha}{ (w - z) (w - \alpha)}  dw\Bigg\vert.
\end{align}
For $\vert w - \alpha\vert = \frac{r}{2}$, by the subharmonicity of $\vert f^{\prime}\vert^{2}$ we have
\begin{align*}
\vert f^{\prime}(w)\vert^{2} < \frac{1}{\frac{r^{2}}{4}} \iint \limits_{\vert u-w\vert\leq\frac{r}{2}} \vert f^{\prime}(u)\vert^{2} dA(u)\leq \frac{C}{A(\Delta_{\eta}(\alpha))} \iint \limits_{\Delta_{\eta}(\alpha)} \vert f^{\prime}(u)\vert^{2} dA(u).
\end{align*}
Since $\vert w - z\vert > \frac{r}{4}$ when  $\vert w - \alpha\vert = \frac{r}{2}$, from \eqref{Cauchy} we get
\[
\vert f^{\prime}(z)^{2} - f^{\prime}(\alpha)^{2}  \vert \leq \frac{C \vert z-\alpha\vert}{r} \frac{1}{A(\Delta_{\eta}(\alpha))} \iint \limits_{\Delta_{\eta}(\alpha)} \vert f^{\prime}(u)\vert^{2} dA(u).
\]
Since we may assume that $C>2$, taking $\vert z-\alpha\vert < \frac{\varepsilon r}{2C}$, then we have $\vert z-\alpha\vert < \frac{r}{4}$ and we get
\begin{equation}\label{last_eq_lem}
\vert f^{\prime}(z)^{2} - f^{\prime}(\alpha)^{2}  \vert \leq \frac{\varepsilon}{2A(\Delta_{\eta}(\alpha))} \iint \limits_{\Delta_{\eta}(\alpha)} \vert f^{\prime}(u)\vert^{2} dA(u).
\end{equation}
Combining \eqref{notA} and \eqref{last_eq_lem}, we get
\[
\vert f^{\prime}(z)\vert^{2} >  \frac{1}{2} \vert f^{\prime}(\alpha)\vert^{2} > \lambda \vert f^{\prime}(\alpha)\vert^{2}.
\]
This means that if $\Delta^{\prime}=\lbrace z\in\mathbb{D}: \vert z-\alpha\vert < \frac{\varepsilon r}{2C}\rbrace$ then $\Delta^{\prime} \subset E_{\lambda}(\alpha)$ and 
\[
A(E_{\lambda}(\alpha))\geq A(\Delta^{\prime})= \frac{\varepsilon^{2}}{4C^{2}}r^{2}
= \frac{\varepsilon^{2}}{4C^{2}} A(\Delta_{\eta}(\alpha)).
\] 
We finally use this last inequality in \eqref{epsilon_lambda} and we complete the proof.
 \end{proof}
 
 \begin{proof}[Proof of theorem \ref{integral_theorem}.] $(ii) \Leftrightarrow (iii)$ This is easy and it is proved in \cite{Luecking81}.\\
$(iii) \Rightarrow (i)$ Let $\alpha\in \mathbb{D}\setminus B$, where $B$ is as in lemma \ref{lemma_3}, where $0<\varepsilon<1$, $0<\lambda<\frac{1}{2}$. Then $\frac{B_{\lambda}f(\alpha)}{\vert f^{\prime}(\alpha)\vert^{2}} \leq \frac{1}{\varepsilon^{3}}$ and, if we choose $\lambda<\varepsilon^{\frac{6}{\delta}}$, then, from lemma \ref{Luecking_lemma1}, we get that
 \begin{align}\label{lambda_rel}
 \frac{A(E_{\lambda}(\alpha))}{A(\Delta_{\eta}(\alpha))} > \frac{\frac{2}{\delta}\log\frac{1}{\varepsilon^{3}}}{\log\frac{1}{\varepsilon^{3}}+\frac{2}{\delta}\log\frac{1}{\varepsilon^{3}}}> 1 - \frac{\delta}{2}.
 \end{align}
 Combining \eqref{second_part2} and \eqref{lambda_rel}, we get
 \begin{align*}
A(G_{c} \cap E_{\lambda}(\alpha)) &=  A(G_{c} \cap \Delta_{\eta}(\alpha)) - A(G_{c} \cap (\Delta_{\eta}(\alpha)\setminus E_{\lambda}(\alpha)))\\
& \geq \delta A(\Delta_{\eta}(\alpha)) - A(\Delta_{\eta}(\alpha)\setminus E_{\lambda}(\alpha))\\
& = \delta A(\Delta_{\eta}(\alpha)) - A(\Delta_{\eta}(\alpha)) + A(E_{\lambda}(\alpha))\\
& \geq \delta A(\Delta_{\eta}(\alpha)) - A(\Delta_{\eta}(\alpha)) + A(\Delta_{\eta}(\alpha)) - \frac{\delta}{2} A(\Delta_{\eta}(\alpha))\\
& = \frac{\delta}{2} A(\Delta_{\eta}(\alpha))
 \end{align*}
Now let $f\in H_{0}^{p}, \zeta\in\mathbb{T}$ and $\alpha\in \Gamma_{\beta}(\zeta)\setminus B$. Then, using the last relation and $E_{\lambda}(\alpha) \subset \Delta_{\eta}(\alpha)\subset\Gamma_{\beta^{\prime}}(\zeta)$, we get
 \begin{align*}
 \frac{1}{A(\Delta_{\eta}(\alpha))} & \iint \limits_{G_{c}\cap\Gamma_{\beta^{\prime}}(\zeta)} \chi_{\Delta_{\eta}(\alpha)}(z)  \vert f^{\prime}(z) \vert^{2} dA(z) \\
 & \geq  \frac{\delta}{2 A(G_{c} \cap E_{\lambda}(\alpha))} \iint \limits_{G_{c}\cap E_{\lambda}(\alpha)}  \chi_{\Delta_{\eta}(\alpha)}(z) \vert f^{\prime}(z) \vert^{2} dA(z)\\
 & =  \frac{\delta}{2 A(G_{c} \cap E_{\lambda}(\alpha))} \iint \limits_{G_{c}\cap E_{\lambda}(\alpha)}   \vert f^{\prime}(z) \vert^{2} dA(z)
 \geq  \frac{\delta\lambda}{2}  \vert f^{\prime}(\alpha) \vert^{2}.
 \end{align*}
 Integrating the last relation over the set $\Gamma_{\beta}(\zeta)\setminus B$ and using Fubini's theorem on the left side, we have
 \begin{equation*}
 \iint \limits_{G_{c}\cap\Gamma_{\beta^{\prime}}(\zeta)} \vert f^{\prime}(z) \vert^{2} \Big[\hspace{1mm} \iint \limits_{\Gamma_{\beta}(\zeta)\setminus B} \frac{\chi_{\Delta_{\eta}(\alpha)}(z)}{A(\Delta_{\eta}(\alpha))}    dA(\alpha)\Big] dA(z) \geq  \frac{\delta\lambda}{2} \iint \limits_{\Gamma_{\beta}(\zeta)\setminus B} \vert f^{\prime}(\alpha) \vert^{2} dA(\alpha).
\end{equation*}
 With similar arguments as in relation \eqref{brackets_int}, we can show that the integral in the brackets is bounded above from a constant $C>0$ depending only on $\eta$. So, we have that
 \begin{align*}
 \iint \limits_{G_{c}\cap\Gamma_{\beta^{\prime}}(\zeta)} \vert f^{\prime}(z) \vert^{2} dA(z) & \geq  \frac{C\delta\lambda}{2} \iint \limits_{\Gamma_{\beta}(\zeta)\setminus B} \vert f^{\prime}(\alpha) \vert^{2} dA(\alpha)\\
 & =  \frac{C\delta\lambda}{2} \iint \limits_{\Gamma_{\beta}(\zeta)} \vert f^{\prime}(\alpha) \vert^{2} dA(\alpha) - \frac{C\delta\lambda}{2} \iint \limits_{\Gamma_{\beta}(\zeta)\cap B} \vert f^{\prime}(\alpha) \vert^{2} dA(\alpha).
\end{align*}
 Because of lemma \ref{lemma_3} we have that
 \[
 \iint \limits_{G_{c}\cap\Gamma_{\beta^{\prime}}(\zeta)} \vert f^{\prime}(z) \vert^{2} dA(z) \geq  \frac{C\delta\lambda}{2} \iint \limits_{\Gamma_{\beta}(\zeta)} \vert f^{\prime}(\alpha) \vert^{2} dA(\alpha) - \varepsilon \frac{C^{\prime}\delta\lambda}{2} \iint \limits_{\Gamma_{\beta^{\prime}}(\zeta)} \vert f^{\prime}(\alpha) \vert^{2} dA(\alpha)
 \]
 and so
 \[
 \iint \limits_{G_{c}\cap\Gamma_{\beta^{\prime}}(\zeta)} \vert f^{\prime}(z) \vert^{2} dA(z) + \varepsilon \frac{C^{\prime}\delta\lambda}{2} \iint \limits_{\Gamma_{\beta^{\prime}}(\zeta)} \vert f^{\prime}(\alpha) \vert^{2} dA(\alpha) \geq  \frac{C\delta\lambda}{2} \iint \limits_{\Gamma_{\beta}(\zeta)} \vert f^{\prime}(\alpha) \vert^{2} dA(\alpha).
 \]
 Hence,
 \begin{align*}
 \Big(\iint \limits_{G_{c}\cap\Gamma_{\beta^{\prime}}(\zeta)} \vert f^{\prime}(z) \vert^{2} dA(z)\Big)^{\frac{1}{2}} & + \Big(\frac{C^{\prime}\varepsilon\delta\lambda}{2}\Big)^{\frac{1}{2}} \Big(\iint \limits_{\Gamma_{\beta^{\prime}}(\zeta)} \vert f^{\prime}(\alpha) \vert^{2} dA(\alpha)\Big)^{\frac{1}{2}} \\
 & \geq  \Big(\frac{C\delta\lambda}{2}\Big)^{\frac{1}{2}} \Big(\iint \limits_{\Gamma_{\beta}(\zeta)} \vert f^{\prime}(\alpha) \vert^{2} dA(\alpha)\Big)^{\frac{1}{2}}.\\
 \end{align*}
 Applying Minkowski's inequality, we get
 \begin{align*}
\Big[\int \limits_{\mathbb{T}} \Big(\iint \limits_{G_{c}\cap\Gamma_{\beta^{\prime}}(\zeta)} \vert f^{\prime}(z) \vert^{2} dA(z)\Big)^{\frac{p}{2}}dm(\zeta)\Big]^{\frac{1}{p}} & + \Big( \frac{C^{\prime}\varepsilon\delta\lambda}{2} \Big)^{\frac{1}{2}}  \Big[\int \limits_{\mathbb{T}} \Big(  \iint \limits_{\Gamma_{\beta^{\prime}}(\zeta)} \vert f^{\prime}(\alpha) \vert^{2} dA(\alpha)\Big)^{\frac{p}{2}} dm(\zeta)\Big]^{\frac{1}{p}} \\
 & \geq \Big(\frac{C\delta\lambda}{2}\Big)^{\frac{1}{2}} \Big[ \int \limits_{\mathbb{T}}  \Big(\iint \limits_{\Gamma_{\beta}(\zeta)} \vert f^{\prime}(\alpha) \vert^{2} dA(\alpha)\Big)^{\frac{p}{2}} dm(\zeta)\Big]^{\frac{1}{p}}\\
 \end{align*}
 and so
 \begin{align}\label{almost_last}
\Big[\int \limits_{\mathbb{T}} \Big(\iint \limits_{G_{c}\cap\Gamma_{\beta^{\prime}}(\zeta)} \vert f^{\prime}(z) \vert^{2}  dA(z)\Big)^{\frac{p}{2}}dm(\zeta) & \Big]^{\frac{1}{p}}   \geq \Big(\frac{C\delta\lambda}{2}\Big)^{\frac{1}{2}} \Big[ \int \limits_{\mathbb{T}}  \Big(\iint \limits_{\Gamma_{\beta}(\zeta)} \vert f^{\prime}(\alpha) \vert^{2} dA(\alpha)\Big)^{\frac{p}{2}} dm(\zeta)\Big]^{\frac{1}{p}}\nonumber \\  
&- \Big( \frac{C^{\prime}\varepsilon\delta\lambda}{2} \Big)^{\frac{1}{2}}  \Big[\int \limits_{\mathbb{T}} \Big(  \iint \limits_{\Gamma_{\beta^{\prime}}(\zeta)} \vert f^{\prime}(\alpha) \vert^{2} dA(\alpha)\Big)^{\frac{p}{2}} dm(\zeta)\Big]^{\frac{1}{p}}.\nonumber\\
 \end{align}
 According to \eqref{Stolz_norm}, both integrals at the right side of \eqref{almost_last}, represent equivalent norms in $H_{0}^{p}$. Due to the relation between $\beta$ and $\beta^{\prime}$ there is   $C^{\prime\prime}>0$ which depends only on $\eta$, such that 
 \begin{equation}\label{equiv_stolz_norms}
 \Big[\int \limits_{\mathbb{T}} \Big(  \iint \limits_{\Gamma_{\beta^{\prime}}(\zeta)} \vert f^{\prime}(\alpha) \vert^{2} dA(\alpha)\Big)^{\frac{p}{2}} dm(\zeta)\Big]^{\frac{1}{p}}
 \leq C^{\prime\prime} \Big[ \int \limits_{\mathbb{T}}  \Big(\iint \limits_{\Gamma_{\beta}(\zeta)} \vert f^{\prime}(\alpha) \vert^{2} dA(\alpha)\Big)^{\frac{p}{2}} dm(\zeta)\Big]^{\frac{1}{p}}.
 \end{equation}
 Combining relations \eqref{almost_last} and \eqref{equiv_stolz_norms}, we get
 \begin{align*}
 \Big[\int \limits_{\mathbb{T}} \Big[\Big(\iint \limits_{G_{c}\cap\Gamma_{\beta^{\prime}}(\zeta)}  \vert f^{\prime}(z) \vert^{2} dA(z)\Big)^{\frac{p}{2}}   & dm(\zeta)  \Big]^{\frac{1}{p}} \geq \Big(\frac{C\delta\lambda}{2}\Big)^{\frac{1}{2}} \Big[ \int \limits_{\mathbb{T}}  \Big(\iint \limits_{\Gamma_{\beta}(\zeta)} \vert f^{\prime}(\alpha) \vert^{2} dA(\alpha)\Big)^{\frac{p}{2}} dm(\zeta)\Big]^{\frac{1}{p}}\\  
&- \Big( \frac{C^{\prime}\varepsilon\delta\lambda}{2} \Big)^{\frac{1}{2}}  C^{\prime\prime} \Big[ \int \limits_{\mathbb{T}}  \Big(\iint \limits_{\Gamma_{\beta}(\zeta)} \vert f^{\prime}(\alpha) \vert^{2} dA(\alpha)\Big)^{\frac{p}{2}} dm(\zeta)\Big]^{\frac{1}{p}}\\
 & = \Big(\frac{\delta\lambda}{2}\Big)^{\frac{1}{2}} [C^{\frac{1}{2}}  - \varepsilon^{\frac{1}{2}} {C^{\prime}}^{\frac{1}{2}}C^{\prime\prime}] \Vert f  \Vert_{H_{0}^{p}}.
\end{align*}
Choosing $\varepsilon$ small enough so that $C  - \varepsilon^{\frac{1}{2}} {C^{\prime}}^{\frac{1}{2}}C^{\prime\prime}>0$, we have that
 \begin{equation*}
 \Big[\int \limits_{\mathbb{T}} \Big(\iint \limits_{G_{c}\cap\Gamma_{\beta^{\prime}}(\zeta)} \vert f^{\prime}(z) \vert^{2} dA(z)\Big)^{\frac{p}{2}}dm(\zeta)\Big]^{\frac{1}{p}} \geq C \Vert f  \Vert_{H_{0}^{p}},
 \end{equation*}
 and since $G_{c} = \lbrace z\in\mathbb{D}: \vert g(z)\vert>c   \rbrace$, we have
 \begin{align*}
 \Vert S_{g}f \Vert_{H_{0}^{p}} &  \asymp \Big[\int \limits_{\mathbb{T}} \Big(\iint \limits_{\Gamma_{\beta^{\prime}}(\zeta)} \vert (S_{g}f(z))^{\prime} \vert^{2} dA(z)\Big)^{\frac{p}{2}}dm(\zeta)\Big]^{\frac{1}{p}}\\
 & =  \Big[\int \limits_{\mathbb{T}} \Big(\iint \limits_{\Gamma_{\beta^{\prime}}(\zeta)} \vert f^{\prime}(z) \vert^{2} \vert g(z) \vert^{2} dA(z)\Big)^{\frac{p}{2}}dm(\zeta)\Big]^{\frac{1}{p}} \\
 &\geq c \Big[\int \limits_{\mathbb{T}} \Big(\iint \limits_{G_{c}\cap\Gamma_{\beta^{\prime}}(\zeta)} \vert f^{\prime}(z) \vert^{2} dA(z)\Big)^{\frac{p}{2}}dm(\zeta)\Big]^{\frac{1}{p}}\geq C \Vert f  \Vert_{H_{0}^{p}}.
\end{align*}
So the integral operator $S_{g}$ has closed range.

$(i) \Rightarrow (ii)$ Let $\alpha\in\mathbb{D}$, $\zeta\in\mathbb{T}$, $\eta\in (0,1)$, $E(z_{0};r)=\lbrace z\in\mathbb{D}: \vert z-z_{0} \vert <r\rbrace $, $C(z_{0},r)=\lbrace z\in\mathbb{D}: \vert z-z_{0} \vert =r \rbrace$ and the arc $I_{\alpha}=\lbrace \zeta\in\mathbb{T}:\Gamma_{\frac{1}{2}}(\zeta)\cap D_{\eta}(\alpha)\neq \emptyset\rbrace$. It's easy to see that $\zeta\in I_{\alpha}$ is equivalent to  $\alpha\in\Gamma_{\eta^{\prime}}(\zeta)$, where $\eta^{\prime}$ depends only on $\eta$. In fact, an elementary geometric argument shows that $1-\eta^{\prime} \asymp 1-\eta$, where the underlying constants are absolute.

Set $R_{0}=\frac{1+\eta^{\prime}}{2}$. We continue with the proof by considering two cases for $\alpha$: {\bf (a)} $R_{0} \leq \vert\alpha\vert<1$ and {\bf (b)} $0\leq \vert\alpha\vert\leq R_{0}$.

{\bf Case (a) $R_{0} \leq \vert\alpha\vert \leq 1$:} Then another simple geometric argument gives $m(I_{\alpha}) \asymp \frac{1-\vert\alpha\vert}{(1-\eta^{\prime})^{\frac{1}{2}}}$ and hence:
\begin{equation}\label{I_alpha_estimation}
m(I_{\alpha}) \asymp \frac{1-\vert\alpha\vert}{(1-\eta)^{\frac{1}{2}}}.
\end{equation}

If $S_{g}$ has closed range on $H_{0}^{p}$ then there exists $C>0$ such that for every $f \in H_{0}^{p}$ we have
\begin{equation}\label{Hardy_closed_range}
C\Vert  S_{g}f \Vert_{H_{0}^{p}}^{p}  \geq \Vert  f  \Vert_{H_{0}^{p}}^{p}.
\end{equation}
Let 
\[
\psi_{\alpha}(z)=\frac{\alpha-z}{1-\overline{\alpha}z}.
\]
Then, after some calculations, we get that $\Vert  \psi_{\alpha}-\alpha  \Vert_{H^{p}}^{p} \asymp (1-\vert\alpha\vert)$.    

Setting $f=\psi_{\alpha}-\alpha$ in \eqref{Hardy_closed_range} and using $(x+y)^{p}\leq 2^{p-1}(x^{p} + y^{p})$, we get
\begin{align}\label{basic_closed_range}
1-\vert\alpha\vert & \leq C\Vert  S_{g}(\psi_{\alpha}-\alpha) \Vert_{H_{0}^{p}}^{p}
 = C\int \limits_{\mathbb{T}} \Big(\iint \limits_{\Gamma_{\frac{1}{2}}(\zeta)} \vert \psi_{\alpha}^{\prime}(z) \vert^{2} \vert g(z) \vert^{2} dA(z)\Big)^{\frac{p}{2}} dm(\zeta)\nonumber\\
 & \leq C \int \limits_{I_{\alpha}} \Big(\iint \limits_{\Gamma_{\frac{1}{2}}(\zeta)\cap G_{c} \cap D_{\eta}(\alpha)} \vert \psi_{\alpha}^{\prime}(z) \vert^{2} \vert g(z) \vert^{2} dA(z)\Big)^{\frac{p}{2}} dm(\zeta) \nonumber\\
 & + C \int \limits_{I_{\alpha}} \Big(\iint \limits_{\Gamma_{\frac{1}{2}}(\zeta)\cap (D_{\eta}(\alpha) \setminus  G_{c})} \vert \psi_{\alpha}^{\prime}(z) \vert^{2} \vert g(z) \vert^{2} dA(z)\Big)^{\frac{p}{2}} dm(\zeta) \nonumber\\
 &+ C\int \limits_{I_{\alpha}} \Big(\iint \limits_{\Gamma_{\frac{1}{2}}(\zeta) \setminus D_{\eta}(\alpha)} \vert \psi_{\alpha}^{\prime}(z) \vert^{2} \vert g(z) \vert^{2} dA(z)\Big)^{\frac{p}{2}} dm(\zeta) \nonumber\\
 & + C \int \limits_{\mathbb{T} \setminus I_{\alpha}} \Big(\iint \limits_{\Gamma_{\frac{1}{2}}(\zeta)} \vert \psi_{\alpha}^{\prime}(z) \vert^{2} \vert g(z) \vert^{2} dA(z)\Big)^{\frac{p}{2}} dm(\zeta) \nonumber\\
 & = C(I_{1} + I_{2} + I_{3} + I_{4}).
\end{align}
Using $A(D_{\eta}(\alpha)) = \frac{(1-\vert \alpha \vert^{2})^{2}}{(1-\eta^{2}\vert \alpha \vert^{2})^{2}}\eta^{2}\leq \frac{(1-\vert \alpha \vert^{2})^{2}}{(1-\eta^{2})^{2}}$, we get
\begin{align*}
I_{1} & \leq \Vert g \Vert_{\infty}^{p} \int \limits_{I_{\alpha}} \Big(\iint \limits_{G_{c} \cap D_{\eta(\alpha)}} \frac{(1-\vert \alpha \vert^{2})^{2}}{\vert 1-\overline{\alpha}z\vert^{4}}  dA(z)\Big)^{\frac{p}{2}} dm(\zeta)\\
& \leq \Vert g \Vert_{\infty}^{p} m(I_{\alpha}) \Big(\frac{A(G_{c}\cap D_{\eta}(\alpha))}{(1-\vert \alpha \vert^{2})^{2}}\Big)^{\frac{p}{2}}\\
& \leq \Vert g \Vert_{\infty}^{p} m(I_{\alpha}) \frac{1}{(1-\eta^{2})^{p}} \Big(\frac{A(G_{c}\cap D_{\eta}(\alpha))}{A(D_{\eta}(\alpha))}\Big)^{\frac{p}{2}}.
\end{align*}
Using $\vert g(z) \vert \leq c$ in $\mathbb{D} \setminus  G_{c}$ and making the change of variables $w=\psi_{\alpha}(z)$, we get
\begin{align*}
I_{2} & \leq c^{p} \int \limits_{I_{\alpha}} \Big(\iint \limits_{\mathbb{D}} \vert \psi_{\alpha}^{\prime}(z) \vert^{2}  dA(z)\Big)^{\frac{p}{2}} dm(\zeta) = c^{p} \int \limits_{I_{\alpha}} \Big(\iint \limits_{\mathbb{D}} dA(w)\Big)^{\frac{p}{2}} dm(\zeta) = c^{p} m(I_{\alpha}).
\end{align*}
We increase $I_{3}$ by extending it over $\mathbb{D} \setminus D_{\eta}(\alpha)$ and then we make the change of variables $w=\psi_{\alpha}(z)$ to get
\begin{align*}
I_{3} & \leq \Vert g \Vert_{\infty}^{p} \int \limits_{I_{\alpha}} \Big(\iint \limits_{\mathbb{D} \setminus D_{\eta}(0)}   dA(w)\Big)^{\frac{p}{2}} dm(\zeta) = \Vert g \Vert_{\infty}^{p} m(I_{\alpha}) (1-\eta^{2})^{\frac{p}{2}}.
\end{align*}
In order to estimate $I_{4}$ we have first to estimate $\iint \limits_{\Gamma_{\frac{1}{2}}(\zeta)} \vert \psi_{\alpha}^{\prime}(z) \vert^{2} dA(z)$, when $\zeta\in\mathbb{T} \setminus I_{\alpha}$. Without loss of generality we may assume that $\alpha\in[R_{0},1)$. For $j\in\mathbb{N}, j\geq 2$, we define $r_{j}=1-\frac{1}{2^{j}}$ and  consider the sets $\Omega_{1} = E(0;\frac{1}{2})$ and $\Omega_{j}= (E(0;r_{j})\setminus E(0;r_{j-1}))\cap \Gamma_{\frac{1}{2}}(\zeta)$. Then we have that $\Gamma_{\frac{1}{2}}(\zeta) = \bigcup \limits_{j=1}^{+\infty}\Omega_{j}$ and $A(\Omega_{j})\asymp \frac{1}{4^{j}}$, when $j\geq 1$. We fix $z_{j}\in\Omega_{j}$ such that $Arg(z_{j}) = Arg(\zeta)$. Then, if $z\in\Omega_{j}$, we have $\vert 1-\alpha z\vert \asymp \vert \frac{1}{\alpha}-z\vert \asymp \vert \frac{1}{\alpha}-z_{j}\vert$. Also we have that $\vert \frac{1}{\alpha}-z_{j}\vert \asymp \big\vert \frac{1}{2^{j}} + 1-\vert\alpha\vert + \vert Arg(\zeta)\vert \big\vert$. In all these relations, the underlying constants are absolute. If $\zeta\in\mathbb{T}\setminus I_{\alpha}$, then $a\not\in \Gamma_{\frac{1}{2}}(\zeta)$ which means that $1-\vert \alpha \vert < \vert Arg(\zeta)\vert$, so we have that $\vert \frac{1}{\alpha}-z_{j}\vert \asymp \big\vert \frac{1}{2^{j}} + \vert Arg(\zeta)\vert \big\vert$.  There is some $j_{0}$ so that  $\frac{1}{2^{j_{0}}} \leq \vert Arg(\zeta)\vert \leq \frac{1}{2^{j_{0}-1}} $. For $j<j_{0}$ we have $\vert Arg(\zeta)\vert < \frac{1}{2^{j}}$ which implies that $\vert \frac{1}{\alpha}-z_{j}\vert \asymp  \frac{1}{2^{j}}$  and for $j>j_{0}$ we have $\vert Arg(\zeta)\vert > \frac{1}{2^{j}}$ which implies that $\vert \frac{1}{\alpha}-z_{j}\vert \asymp \vert Arg(\zeta)\vert$. Therefore
\begin{align*}
\iint \limits_{\Gamma_{\frac{1}{2}}(\zeta)} \vert \psi_{\alpha}^{\prime}(z) \vert^{2}& dA(z) = \iint \limits_{\Omega_{1}} \frac{(1-\vert \alpha \vert^{2})^{2}}{\vert 1-\alpha z\vert^{4}} dA(z) + \sum_{j=2}^{+\infty}\iint \limits_{\Omega_{j}} \frac{(1-\vert \alpha \vert^{2})^{2}}{\vert 1-\alpha z\vert^{4}} dA(z)\\
& \asymp (1-\vert \alpha \vert^{2})^{2} + \sum_{j=2}^{j_{0}}\iint \limits_{\Omega_{j}} \frac{(1-\vert \alpha \vert^{2})^{2}}{\vert \frac{1}{\alpha}-z_{j}\vert^{4}} dA(z) + \sum_{j=j_{0}}^{+\infty} \iint \limits_{\Omega_{j}} \frac{(1-\vert \alpha \vert^{2})^{2}}{\vert \frac{1}{\alpha}-z_{j}\vert^{4}} dA(z)\\
& \asymp (1-\vert \alpha \vert^{2})^{2} + \sum_{j=2}^{j_{0}} A(\Omega_{j}) (1-\vert \alpha \vert^{2})^{2}(2^{j})^{4} + \sum_{j=j_{0}}^{+\infty} A(\Omega_{j}) \frac{(1-\vert \alpha \vert^{2})^{2}}{\vert Arg(\zeta)\vert^{4}}\\
& \asymp (1-\vert \alpha \vert^{2})^{2} + (1-\vert \alpha \vert^{2})^{2} \sum_{j=2}^{j_{0}} \frac{1}{4^{j}} 16^{j} +  \frac{(1-\vert \alpha \vert^{2})^{2}}{\vert Arg(\zeta)\vert^{4}} \sum_{j=j_{0}}^{+\infty} \frac{1}{4^{j}}.
\end{align*} 
But $\sum_{j=2}^{j_{0}} 4^{j}\asymp 4^{j_{0}} \asymp \frac{1}{\vert Arg(\zeta)\vert^{2}}$ and $\sum_{j=j_{0}}^{+\infty} \frac{1}{4^{j}}\asymp \frac{1}{4^{j_{0}}}\asymp \vert Arg(\zeta)\vert^{2}$. Therefore
\begin{align*}
\iint \limits_{\Gamma_{\frac{1}{2}}(\zeta)} \vert \psi_{\alpha}^{\prime}(z) \vert^{2}& dA(z)  \asymp (1-\vert \alpha \vert^{2})^{2} + \frac{(1-\vert \alpha \vert^{2})^{2}}{\vert Arg(\zeta)\vert^{2}}.
\end{align*} 
Since $\alpha$ is positive, there is $\phi_{0}$ such that $\mathbb{T}\setminus I_{\alpha}=[\phi_{0}, 2\pi-\phi_{0}]$ and $\phi_{0}\asymp m(I_{\alpha})$. Therefore
\begin{align*}
I_{4} & \asymp \int_{\phi_{0}}^{\pi} (1-\vert \alpha \vert^{2})^{p} d\phi + \int_{\phi_{0}}^{\pi}  \frac{(1-\vert \alpha \vert^{2})^{p}}{\phi^{p}}d\phi\\
& \asymp (1-\vert \alpha \vert^{2})^{p} + \frac{(1-\vert \alpha \vert^{2})^{p}}{\phi_{0}^{p-1}}\\
& \asymp (1-\vert \alpha \vert^{2})^{p} + \frac{(1-\vert \alpha \vert^{2})^{p}}{m(I_{\alpha})^{p-1}}.
\end{align*}

Substituting the estimates for $I_{1},I_{2},I_{3},I_{4}$ in \eqref{basic_closed_range}, we get
\begin{align*}
1-\vert \alpha \vert &\leq C\Big[\Vert g \Vert_{\infty}^{p} m(I_{\alpha}) \frac{1}{(1-\eta^{2})^{p}} \Big(\frac{A(G_{c}\cap D_{\eta}(\alpha))}{A(D_{\eta}(\alpha))}\Big)^{\frac{p}{2}} + c^{p} m(I_{\alpha}) \\
& + \Vert g \Vert_{\infty}^{p} m(I_{\alpha}) (1-\eta^{2})^{\frac{p}{2}} + (1-\vert \alpha \vert^{2})^{p} + \frac{(1-\vert \alpha \vert^{2})^{p}}{m(I_{\alpha})^{p-1}}\Big].
\end{align*}
Using \eqref{I_alpha_estimation} we get
\begin{align*}
1-\vert \alpha \vert &\leq C\Big[\Vert g \Vert_{\infty}^{p} \frac{1-\vert\alpha\vert}{(1-\eta)^{\frac{1}{2}}} \frac{1}{(1-\eta^{2})^{p}} \Big(\frac{A(G_{c}\cap D_{\eta}(\alpha))}{A(D_{\eta}(\alpha))}\Big)^{\frac{p}{2}}\\
& + c^{p} \frac{1-\vert\alpha\vert}{(1-\eta)^{\frac{1}{2}}} 
 + \Vert g \Vert_{\infty}^{p} \frac{1-\vert\alpha\vert}{(1-\eta)^{\frac{1}{2}}} (1-\eta^{2})^{\frac{p}{2}} \\
 & + (1-\vert \alpha \vert^{2})(1-\eta^{2})^{p-1} + (1-\vert \alpha \vert^{2})(1-\eta)^{\frac{p-1}{2}}\Big].
\end{align*}
Thus
\begin{align*}
C &\leq \Vert g \Vert_{\infty}^{p} \frac{1}{(1-\eta)^{\frac{2p+1}{2}}}  \Big(\frac{A(G_{c}\cap D_{\eta}(\alpha))}{A(D_{\eta}(\alpha))}\Big)^{\frac{p}{2}} +  \frac{c^{p}}{(1-\eta)^{\frac{1}{2}}} \\
& + \Vert g \Vert_{\infty}^{p} (1-\eta)^{\frac{p-1}{2}} + (1-\eta)^{p-1} + (1-\eta)^{\frac{p-1}{2}}.
\end{align*}
Choose $\eta$ close enough to 1 so that $\Vert g \Vert_{\infty}^{p} (1-\eta)^{\frac{p-1}{2}} + (1-\eta)^{p-1} + (1-\eta)^{\frac{p-1}{2}} <\frac{C}{4}$ and then set $C_{\eta}=\frac{1}{(1-\eta)^{\frac{1}{2}}}$. We have that
\begin{align*}
\frac{3C}{4} \leq \frac{\Vert g \Vert_{\infty}^{p} }{C_{\eta}^{2p+1}}  \Big(\frac{A(G_{c}\cap D_{\eta}(\alpha))}{A(D_{\eta}(\alpha))}\Big)^{\frac{p}{2}} +  \frac{c^{p}}{C_{\eta}^{\frac{1}{2}}}.
\end{align*}
Choose $c$ small enough so that $\frac{c^{p}}{C_{\eta}^{\frac{1}{2}}} <\frac{C}{4}$. Then 
\begin{align*}
\frac{C}{2} \leq \frac{\Vert g \Vert_{\infty}^{p} }{C_{\eta}^{2p+1}}  \Big(\frac{A(G_{c}\cap D_{\eta}(\alpha))}{A(D_{\eta}(\alpha))}\Big)^{\frac{p}{2}}
\end{align*}
and finally
\begin{align*}
\Big(\frac{C C_{\eta}^{2p+1}}{2\Vert g \Vert_{\infty}^{p}}\Big)^{\frac{2}{p}} \leq \frac{A(G_{c}\cap D_{\eta}(\alpha))}{A(D_{\eta}(\alpha))}
\end{align*}
or
\begin{equation*}
A(G_{c}\cap D_{\eta}(\alpha)) \geq \delta A(D_{\eta}(\alpha)),
\end{equation*}
for every $\alpha$ with $R_{0} \leq\vert \alpha\vert<1$. 

{\bf Case (b)} $0\leq \vert\alpha\vert\leq R_{0}$: There exists $\eta_{1}$, depending only on $\eta$, such that $D_{\eta}(R_{0}) \subseteq D_{\eta_{1}}(0)$. Take $\alpha^{\prime}$ so that $\vert\alpha^{\prime}\vert=R_{0}$ and $Arg(\alpha^{\prime})=Arg(\alpha)$. Then $D_{\eta}(\alpha^{\prime}) \subseteq D_{\eta_{1}}(\alpha)$. Set $\eta_{2}=\max\lbrace\eta, \eta_{1}\rbrace$. Then from case (a) for $\alpha^{\prime}$ we have
\begin{align*}
A(G_{c}\cap D_{\eta_{2}}(\alpha)) & \geq A(G_{c}\cap D_{\eta_{1}}(\alpha)) \geq A(G_{c}\cap D_{\eta}(\alpha^{\prime}))\\
& \geq \delta A(D_{\eta}(\alpha^{\prime})) \geq C\delta A(D_{\eta_1}(\alpha)) \geq C\delta A(D_{\eta_2}(\alpha)),
\end{align*}
where the constants $C>0$ depend only on $\eta$. 

Moreover, when $R_{0} \leq\vert \alpha\vert<1$, we have
\begin{align*}
A(G_{c}\cap D_{\eta_{2}}(\alpha)) & \geq A(G_{c}\cap D_{\eta}(\alpha)) \geq \delta A(D_{\eta}(\alpha)) \geq C\delta A(D_{\eta_2}(\alpha)), 
\end{align*}
where the constant $C>0$ depends only on $\eta$. So, we have proved that there are $\eta_{2}\in(0,1)$, $c>0$ and $C>0$ such that 
\[
A(G_{c}\cap D_{\eta_{2}}(\alpha)) \geq C A(D_{\eta_2}(\alpha)),
\]
for every $\alpha\in\mathbb{D}$, which is what we had to prove.
\end{proof}
\begin{remark} 
 As we can observe, the proof of the implication $(ii) \Rightarrow (i)$ in theorem \ref{integral_theorem} is valid even in the case $p=1$.
\end{remark}
\section{Closed range integral operators on BMOA space}
Let denote as $BMOA_{0}$ the space $BMOA/\mathbb{C}$. In \cite{Anderson}, A. Anderson posed the question of finding a necessary and sufficient condition for the operator $S_{g}$ to have closed range on $BMOA_{0}$.
Next, we answer this question, proving that 
conditions (ii) and (iii) of theorem \ref{integral_theorem}, for $H_{0}^{p}$, are also necessary and sufficient for the integral operator $S_{g}$ to have closed range on $BMOA_{0}$. 

Let $z_{0}\in\mathbb{D}$. The point evaluation functional of the derivative, on  $BMOA$, induced by $z_{0}$, is defined as $\Lambda_{z_{0}}f=f^{\prime}(z_{0}), f\in BMOA$.  It is easy to check that $\Lambda_{z_{0}}$ is bounded on  $BMOA$. Therefore, using Theorem 2.2 and Corollary 2.3 in \cite{Anderson}, we conclude that
the operator $S_{g}:BMOA_{0} \rightarrow BMOA_{0}$ is bounded if and only if $g\in H^{\infty}$. So, we consider $g\in H^{\infty}$ and set again $G_{c} = \lbrace z\in\mathbb{D}: \vert g(z)\vert>c\rbrace$.

The following theorem is the main result of this section. 
\begin{theorem}\label{integral_theorem_bmoa}
Let $g\in H^{\infty}$. Then the following are equivalent:
\begin{enumerate}
\item[(i)] The operator $S_{g}:BMOA_{0}\rightarrow BMOA_{0}$ has closed range 
\item[(ii)] There exist $c>0$, $\delta > 0$ and $\eta \in (0,1)$ such that
\begin{equation}\label{theorem_rel}
A(G_{c} \cap D_{\eta}(a)) \geq \delta A(D_{\eta}(a))
\end{equation}
for all $a \in \mathbb{D}$.
\end{enumerate}
\end{theorem}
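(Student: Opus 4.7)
The proof will parallel Theorem~\ref{integral_theorem}, and the $BMOA$ case turns out cleaner for the direction $(ii)\Rightarrow(i)$ and more delicate for $(i)\Rightarrow(ii)$: the $BMOA$ norm is a single weighted integral (no outer $L^{p}(\mathbb{T})$ integration is required), but because it is defined as a supremum over $\beta\in\mathbb{D}$, the test-function direction needs a concentration argument to locate where the supremum is attained.

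For $(ii)\Rightarrow(i)$, the plan is to use the equivalent form $\|f\|_{*}^{2}\asymp|f(0)|^{2}+\sup_{\beta\in\mathbb{D}}I_{\beta}(f)$, with $I_{\beta}(f)=\iint_{\mathbb{D}}\frac{1-|\beta|^{2}}{|1-\overline{\beta}z|^{2}}|f'(z)|^{2}\log\frac{1}{|z|}\,dA(z)$. Lemmas~\ref{lemma_2} and~\ref{lemma_3} carry over, uniformly in $\beta$, when the Stolz-angle Lebesgue measure is replaced by the weighted measure $d\mu_{\beta}(z)=\frac{1-|\beta|^{2}}{|1-\overline{\beta}z|^{2}}\log\frac{1}{|z|}\,dA(z)$: the density of $d\mu_{\beta}$ is comparable on any pseudo-hyperbolic disk $\Delta_{\eta}(\alpha)$ with constants depending only on $\eta$, so the bracket estimate \eqref{brackets_int} still yields a constant. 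The chain of inequalities in the proof of $(iii)\Rightarrow(i)$ of Theorem~\ref{integral_theorem} then goes through for each $\beta$ \emph{without} the outer Minkowski step, giving
\[
I_{\beta}(f)\leq Cc^{-2}\iint_{\mathbb{D}}|f'|^{2}|g|^{2}\,d\mu_{\beta}+\varepsilon C\|f\|_{*}^{2}.
\]
Taking $\sup_{\beta}$ and absorbing $\varepsilon\|f\|_{*}^{2}$ yields $\|f\|_{*}\leq C\|S_{g}f\|_{*}$.

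For $(i)\Rightarrow(ii)$, the plan is to use the test family $f_{\alpha}(z)=\psi_{\alpha}(z)-\alpha\in BMOA_{0}$, $\alpha\in\mathbb{D}$, with $\psi_{\alpha}(z)=\frac{\alpha-z}{1-\overline{\alpha}z}$. By M\"obius invariance of $BMOA$, $\|f_{\alpha}\|_{*}\asymp 1$ uniformly in $\alpha$, and closed range then gives $\|S_{g}f_{\alpha}\|_{*}\geq C>0$. I switch to the Carleson-box characterization $\|h\|_{*}^{2}\asymp|h(0)|^{2}+\sup_{I\subset\mathbb{T}}\frac{1}{|I|}\iint_{S(I)}|h'|^{2}(1-|z|^{2})\,dA$, where $S(I)$ is the Carleson box over~$I$. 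Using the elementary estimate $|1-\overline{\alpha}z|^{2}\asymp (1-|\alpha|)^{2}+(1-|z|)^{2}+(\arg z-\arg\alpha)^{2}$, one shows that the Carleson ratio of the measure $|\psi_{\alpha}'|^{2}(1-|z|^{2})\,dA$ is $o(1)$ both as $|I|/(1-|\alpha|)+(1-|\alpha|)/|I|\to\infty$ and as the distance from $I$ to $\alpha/|\alpha|$ becomes much larger than $|I|+1-|\alpha|$. Since $|g|^{2}\leq\|g\|_{\infty}^{2}$, this concentration persists after multiplying by $|g|^{2}$, so any arc $I^{*}$ that attains (up to a constant factor) the supremum in $\|S_{g}f_{\alpha}\|_{*}^{2}$ must satisfy $|I^{*}|\asymp 1-|\alpha|$ and contain $\alpha/|\alpha|$. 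For a suitable fixed $\eta$, $S(I^{*})\subset D_{\eta}(\alpha)$, and consequently
\[
\iint_{D_{\eta}(\alpha)}|\psi_{\alpha}'|^{2}|g|^{2}(1-|z|^{2})\,dA\geq C(1-|\alpha|).
\]
Splitting by $G_{c}$, using $|g|\leq c$ on $D_{\eta}(\alpha)\setminus G_{c}$ and the pointwise bound $|\psi_{\alpha}'(z)|^{2}(1-|z|^{2})\asymp 1/(1-|\alpha|)$ on $D_{\eta}(\alpha)$, yields $\|g\|_{\infty}^{2}A(G_{c}\cap D_{\eta}(\alpha))/(1-|\alpha|)\geq (C-C'c^{2})(1-|\alpha|)$; choosing $c$ small produces $A(G_{c}\cap D_{\eta}(\alpha))\geq\delta A(D_{\eta}(\alpha))$ for $|\alpha|$ close to~$1$, and the case of bounded $|\alpha|$ is handled exactly as in case~(b) of Theorem~\ref{integral_theorem}.

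The main obstacle I anticipate is the quantitative concentration step above: proving, uniformly in $\alpha$, that the Carleson supremum of $|\psi_{\alpha}'|^{2}(1-|z|^{2})\,dA$ is attained only on arcs of length comparable to $1-|\alpha|$ aligned with $\alpha/|\alpha|$. This will require a careful case analysis on the size and position of $I$, with area-integral computations in the spirit of those used for $I_{4}$ in the Hardy case, but here arranged to produce an $o(1)$ upper bound on the Carleson ratio for every ``non-optimal'' $I$ rather than just an estimate on one Stolz-angle integral.
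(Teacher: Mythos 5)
Your skeleton (test functions $\psi_{\alpha}-\alpha$ for necessity, a reverse Carleson inequality for sufficiency) agrees with the paper's, but both halves are organized very differently, and the necessity half as you describe it has a genuine gap. The concentration argument that you yourself flag as the main obstacle is not needed at all: since $\frac{(1-\vert\beta\vert^{2})(1-\vert z\vert^{2})}{\vert 1-\overline{\beta}z\vert^{2}}=1-\vert\psi_{\beta}(z)\vert^{2}\leq 1$ for all $z,\beta$, the supremum over $\beta$ in $\Vert S_{g}(\psi_{\alpha}-\alpha)\Vert_{*}^{2}$ is bounded \emph{above} by $C\iint_{\mathbb{D}}\vert\psi_{\alpha}^{\prime}\vert^{2}\vert g\vert^{2}\,dA$ (the neighborhood of the origin, where $\log\frac{1}{\vert z\vert}\not\asymp 1-\vert z\vert^{2}$, is easily controlled). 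Splitting this unweighted integral over $G_{c}\cap D_{\eta}(\alpha)$, $D_{\eta}(\alpha)\setminus G_{c}$ and $\mathbb{D}\setminus D_{\eta}(\alpha)$, the last two pieces are at most $c^{2}$ and $\Vert g\Vert_{\infty}^{2}(1-\eta^{2})$ by the change of variables $w=\psi_{\alpha}(z)$, and choosing $\eta$ near $1$ and $c$ small finishes; a lower bound on the supremum together with an upper bound on \emph{every} term of the supremum already yields the condition, so there is no need to locate where the supremum is attained. Moreover your localization step contains a false claim: $S(I^{*})\subset D_{\eta}(\alpha)$ cannot hold for any $\eta<1$, because a Carleson box reaches the boundary $\mathbb{T}$ while every point of $D_{\eta}(\alpha)$ satisfies $1-\vert z\vert\geq C_{\eta}(1-\vert\alpha\vert)$; you would have to peel off the thin outer layer of $S(I^{*})$ and estimate it separately, adding yet another step to an argument that can be bypassed entirely.

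For $(ii)\Rightarrow(i)$ your plan is to rebuild Lemmas \ref{lemma_2} and \ref{lemma_3} for the weighted measures $d\mu_{\beta}$, i.e.\ to reprove a weighted version of Luecking's reverse inequality uniformly in $\beta$. The paper instead absorbs the weight into the function: with $h_{\beta}(z)=\frac{(1-\vert\beta\vert^{2})^{1/2}}{1-\overline{\beta}z}f^{\prime}(z)$ one has $h_{\beta}\in\mathbb{A}_{1}^{2}$, the $\beta$-term of $\Vert f\Vert_{*}^{2}$ is comparable to $\iint_{\mathbb{D}}\vert h_{\beta}\vert^{2}(1-\vert z\vert^{2})\,dA$, and Luecking's Bergman-space theorem (Theorem \ref{Lue2}, quoted in the paper for exactly this purpose) gives $\iint_{G_{c}}\vert h_{\beta}\vert^{2}(1-\vert z\vert^{2})\,dA\geq C\iint_{\mathbb{D}}\vert h_{\beta}\vert^{2}(1-\vert z\vert^{2})\,dA$ with $C$ independent of $\beta$; taking the supremum over $\beta$ ends the proof. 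Your route is probably completable, but note that the density of $d\mu_{\beta}$ is \emph{not} comparable on disks $\Delta_{\eta}(\alpha)$ that meet the origin (the factor $\log\frac{1}{\vert z\vert}$ is unbounded there), so even this direction would need a patch beyond what you state. As written, then, the proposal is not yet a proof: the sufficiency direction is an unexecuted and avoidable program, and the necessity direction rests on an unproved concentration lemma plus a containment that is false.
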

Recall that the weighted Bergman space $\mathbb{A}_{\gamma}^{p}, \gamma>-1$, is defined as the set of all analytic functions $f$ in $\mathbb{D}$ such that
\[
\iint \limits_{\mathbb{D}} \vert f(z)\vert^{p} (1-\vert z\vert^{2})^{\gamma}dA(z) < \infty.
\]
We will make use of the following theorem of D. Luecking (see \cite{Luecking81}).
\begin{theorem}\label{Lue2}
Let $p\geq 1$, $\gamma>-1$ and measurable $G\subseteq\mathbb{D}$. The following assertions are equivalent.
\begin{enumerate}
\item[(i)] There exists $C>0$ such that 
\begin{equation}\label{Lue_integrals}
\iint \limits_{G} \vert f(z) \vert^{p} (1 - \vert z \vert^{2})^{\gamma} dA(z) 
\geq C \iint \limits_{\mathbb{D}} \vert f(z) \vert^{p} (1 - \vert z \vert^{2})^{\gamma} dA(z)
\end{equation}
for every $f \in \mathbb{A}_{\gamma}^{p}$.
\item[(ii)] There exist $c>0$, $\delta > 0$ and $\eta \in (0,1)$ such that
\[
A(G \cap D_{\eta}(a)) \geq \delta A(D_{\eta}(a))
\]
for all $a \in \mathbb{D}$.
\end{enumerate}
\end{theorem}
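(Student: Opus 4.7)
The plan is to adapt the Luecking-style machinery of Lemmas \ref{Luecking_lemma1}--\ref{lemma_3} from $|f'|^2$ to $|f|^p$ (also a non-negative subharmonic function when $f$ is analytic and $p\geq 1$) and from tent-style integration to the Bergman weight $(1-|z|^2)^\gamma$. The substantive direction is (ii) $\Rightarrow$ (i); the reverse (i) $\Rightarrow$ (ii) will be a contradiction argument with standard normalized reproducing-kernel test functions.

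For (ii) $\Rightarrow$ (i), fix $c,\delta,\eta$ from the hypothesis. For $f\in\mathbb{A}_\gamma^p$, $\lambda\in(0,\tfrac12)$ and $\varepsilon\in(0,1)$ I introduce, in analogy with the paper,
\[
E_\lambda(\alpha) = \{z\in\Delta_\eta(\alpha): |f(z)|^p > \lambda |f(\alpha)|^p\},\qquad B_\lambda f(\alpha) = \frac{1}{A(E_\lambda(\alpha))}\iint_{E_\lambda(\alpha)} |f(z)|^p\, dA(z),
\]
and the bad set
\[
A^{*} = \Big\{\alpha\in\mathbb{D} : |f(\alpha)|^p < \frac{\varepsilon}{A(\Delta_\eta(\alpha))}\iint_{\Delta_\eta(\alpha)} |f(z)|^p\, dA(z)\Big\}.
\]
Since $|f|^p$ is subharmonic, the argument behind Lemma \ref{Luecking_lemma1} (which relies only on subharmonicity) yields the corresponding bound
\[
\frac{A(E_\lambda(\alpha))}{A(\Delta_\eta(\alpha))} \geq \frac{\log(1/\lambda)}{\log\tfrac{B_\lambda f(\alpha)}{|f(\alpha)|^p} + \log(1/\lambda)}.
\]
For $\alpha\notin A^{*}$ we have $B_\lambda f(\alpha)/|f(\alpha)|^p \leq 1/\varepsilon$, so choosing $\lambda = \varepsilon^{2/\delta}$ forces $A(E_\lambda(\alpha)) \geq (1-\delta/2)A(\Delta_\eta(\alpha))$. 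Translating the hypothesis from pseudo-hyperbolic to Euclidean disks via \eqref{euclidean_to_pseudo} (at the cost of adjusting $\eta,\delta$) gives $A(G\cap E_\lambda(\alpha))\geq (\delta/2)A(\Delta_\eta(\alpha))$, so
\[
|f(\alpha)|^p \leq \frac{1}{\lambda\,A(G\cap E_\lambda(\alpha))}\iint_{G\cap\Delta_\eta(\alpha)}|f|^p\, dA \leq \frac{C}{A(\Delta_\eta(\alpha))}\iint_{G\cap\Delta_\eta(\alpha)}|f|^p\, dA.
\]
Multiplying by $(1-|\alpha|^2)^\gamma$, integrating over $\alpha\in\mathbb{D}\setminus A^{*}$ and swapping integrals using $\chi_{\Delta_\eta(\alpha)}(z)\leq\chi_{D_r(z)}(\alpha)$ together with $(1-|\alpha|^2)\asymp(1-|z|^2)$ on $D_r(z)$ (exactly as in \eqref{brackets_int}) produces $\iint_{\mathbb{D}\setminus A^{*}}|f|^p(1-|\alpha|^2)^\gamma\, dA \leq C_1\iint_G |f|^p(1-|z|^2)^\gamma\, dA$. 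A parallel Fubini exchange that mirrors Lemma \ref{lemma_2} yields $\iint_{A^{*}} |f|^p(1-|\alpha|^2)^\gamma\, dA \leq C_2\varepsilon\iint_\mathbb{D}|f|^p(1-|z|^2)^\gamma\, dA$. Adding the two and choosing $\varepsilon$ so that $C_2\varepsilon<1/2$ lets me absorb the bad-set term on the left and conclude \eqref{Lue_integrals}.

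For (i) $\Rightarrow$ (ii), I argue by contrapositive. If the density condition fails for every triple $(c,\delta,\eta)$, then for fixed $\eta$ close to $1$ there is a sequence $\{\alpha_n\}\subset\mathbb{D}$ with $A(G\cap D_\eta(\alpha_n))/A(D_\eta(\alpha_n))\to 0$. Testing \eqref{Lue_integrals} on the standard normalized kernels $f_n(z) = (1-|\alpha_n|^2)^s(1-\overline{\alpha_n}z)^{-s-(\gamma+2)/p}$ with $s$ sufficiently large, I have $\Vert f_n\Vert_{\mathbb{A}_\gamma^p}^p\asymp 1$, $|f_n(z)|^p(1-|z|^2)^\gamma\asymp 1/A(D_\eta(\alpha_n))$ on $D_\eta(\alpha_n)$, and $\iint_{\mathbb{D}\setminus D_\eta(\alpha_n)}|f_n|^p (1-|z|^2)^\gamma\, dA = o(1)$ uniformly in $n$ as $\eta\to 1^{-}$. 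Splitting $\iint_G|f_n|^p(1-|z|^2)^\gamma\, dA$ along $D_\eta(\alpha_n)$ and its complement, and first sending $\eta$ to $1$ to kill the outer piece and then $n\to\infty$ to kill the inner piece, contradicts the existence of the uniform constant $C>0$ in \eqref{Lue_integrals}.

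The main obstacle will be the bookkeeping in (ii) $\Rightarrow$ (i): transferring the density hypothesis between pseudo-hyperbolic and Euclidean disks without degrading the proportionality constant, and following the weight $(1-|z|^2)^\gamma$ through the two Fubini exchanges cleanly enough that the resulting $C_1,C_2$ admit a choice of $\varepsilon$ making the absorption step succeed. The test-function direction is then essentially routine once the correct normalization exponent is identified.
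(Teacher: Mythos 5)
Your architecture is the right one --- it is essentially Luecking's original argument (which the paper does not reproduce; it cites Theorem~\ref{Lue2} from \cite{Luecking81}), and it parallels the paper's own Hardy-space adaptation in Section~2. But there is one genuine gap, and it sits at the crux of the sufficiency direction. You claim that $\alpha\notin A^{*}$ gives $B_\lambda f(\alpha)/\vert f(\alpha)\vert^{p}\leq 1/\varepsilon$. It does not: $B_\lambda f(\alpha)$ is an average over $E_\lambda(\alpha)$, not over $\Delta_\eta(\alpha)$, so $\alpha\notin A^{*}$ only yields
\[
\frac{B_\lambda f(\alpha)}{\vert f(\alpha)\vert^{p}}\;\leq\;\frac{1}{\varepsilon}\,\frac{A(\Delta_\eta(\alpha))}{A(E_\lambda(\alpha))},
\]
and since $E_\lambda(\alpha)$ is precisely the set where $\vert f\vert^{p}$ is large, its average can exceed the average over $\Delta_\eta(\alpha)$ by an uncontrolled factor. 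You need a lower bound on $A(E_\lambda(\alpha))$ \emph{before} you can bound $B_\lambda f(\alpha)$ --- but a lower bound on $A(E_\lambda(\alpha))$ is exactly what Lemma~\ref{Luecking_lemma1} is being invoked to deliver, so the step as written is circular. This is the difficulty that the paper's proof of Lemma~\ref{lemma_3} resolves with the Cauchy-integral oscillation estimate \eqref{Cauchy}--\eqref{last_eq_lem}: for $\alpha\notin A$ one first shows that a disk of radius comparable to $\varepsilon(1-\vert\alpha\vert)$ lies inside $E_\lambda(\alpha)$, giving $A(E_\lambda(\alpha))\gtrsim\varepsilon^{2}A(\Delta_\eta(\alpha))$, and only then deduces $B_\lambda f(\alpha)\lesssim\varepsilon^{-3}\vert f^{\prime}(\alpha)\vert^{2}$ --- which is why the paper's bad set $B$ carries the exponent $\varepsilon^{3}$ and why $\lambda<\varepsilon^{6/\delta}$ is chosen there, rather than your $\lambda=\varepsilon^{2/\delta}$.

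The gap is repairable in two ways, either of which completes your proof. First, reproduce the oscillation step for $\vert f\vert^{p}$: since $(f^{\prime})^{2}$ is replaced by $f$ here, apply Cauchy's formula to $f$ itself on $\vert w-\alpha\vert=r/2$, control $\max_{\vert w-\alpha\vert=r/2}\vert f(w)\vert$ by subharmonicity of $\vert f\vert^{p}$ together with $\alpha\notin A^{*}$, and conclude $\vert f(z)-f(\alpha)\vert\leq C\varepsilon^{-1/p}\frac{\vert z-\alpha\vert}{r}\vert f(\alpha)\vert$; then $\vert z-\alpha\vert\leq c\,\varepsilon^{1/p}r$ forces $\vert f(z)\vert^{p}>\lambda\vert f(\alpha)\vert^{p}$ for $\lambda<\tfrac12$, so $A(E_\lambda(\alpha))\gtrsim\varepsilon^{2/p}A(\Delta_\eta(\alpha))$ and $B_\lambda f(\alpha)\lesssim\varepsilon^{-(1+2/p)}\vert f(\alpha)\vert^{p}$, after which $\lambda\asymp\varepsilon^{2(1+2/p)/\delta}$ yields $A(E_\lambda(\alpha))\geq(1-\tfrac{\delta}{2})A(\Delta_\eta(\alpha))$ as you wanted. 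Second, and more simply, feed the self-referential bound into Lemma~\ref{Luecking_lemma1}: writing $x=A(E_\lambda(\alpha))/A(\Delta_\eta(\alpha))$, the lemma together with the displayed inequality gives $x\log\frac{1}{\varepsilon x}\geq(1-x)\log\frac{1}{\lambda}$; since $x\log\frac{1}{x}\leq e^{-1}$, any $\lambda<\varepsilon^{2/\delta}e^{-2/(e\delta)}$ forces $x>1-\tfrac{\delta}{2}$, with no oscillation estimate at all (your exact choice $\lambda=\varepsilon^{2/\delta}$ just misses; the constant adjustment is harmless). Everything else in the proposal is sound: the two Fubini exchanges carry the weight correctly because $(1-\vert\alpha\vert^{2})\asymp(1-\vert z\vert^{2})$ for $z\in\Delta_\eta(\alpha)$ holds for every real $\gamma$; the absorption step is legitimate because $f\in\mathbb{A}_\gamma^{p}$ makes the right-hand side of \eqref{Lue_integrals} finite; and the test-function direction is the standard one and correct for $s$ large, provided, as you note, you fix $\eta$ first and then let $n\to\infty$.
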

In the proof of theorem \ref{integral_theorem_bmoa}, we will use the fact that $\log\frac{1}{\vert z \vert} \asymp 1-\vert z \vert^{2}$, when $0<\delta\leq\vert z\vert < 1$, where $\delta$ is fixed but arbitrary.
\begin{proof}[Proof of theorem \ref{integral_theorem_bmoa}.]
$(ii) \Rightarrow (i)$ If \eqref{theorem_rel} holds then, because of theorem \ref{Lue2},  \eqref{Lue_integrals} also holds for $G=G_{c}$. For $\beta\in\mathbb{D}$, $z\in\mathbb{D}$ and $f\in BMOA_{0}$, we consider the function  $h_{\beta}(z) = \frac{(1-\vert\beta\vert^{2})^{\frac{1}{2}}}{1-\overline{\beta}z} f^{\prime}(z)$. It's easy to see that if $f\in BMOA_{0}$ then $h_{\beta}\in\mathbb{A}_{1}^{2}$. 
Indeed
\begin{align*}
\Vert h_\beta \Vert_{\mathbb{A}_{1}^{2}}^{2} & = \iint \limits_{\mathbb{D}} \frac{1-\vert\beta\vert^{2}}{\vert 1-\overline{\beta}z \vert^{2}} \vert f^{\prime}(z) \vert^{2} (1-\vert z\vert^{2}) dA(z)\\
& \leq C \iint \limits_{\mathbb{D}} \frac{1-\vert\beta\vert^{2}}{\vert 1-\overline{\beta}z \vert^{2}} \vert f^{\prime}(z) \vert^{2} \log\frac{1}{\vert z\vert} dA(z)\leq \Vert f\Vert_{BMOA_{0}}^{2}<\infty.
\end{align*}
Let $\beta\in\mathbb{D}$. We have that
\begin{align*}
\Vert S_{g}f \Vert_{BMOA_{0}}^{2}& = \sup \limits_{z_{0}\in\mathbb{D}} \iint \limits_{\mathbb{D}} \frac{1-\vert z_{0}\vert^{2}}{\vert 1-\overline{z_{0}}z\vert^{2}} \vert (S_{g}f(z))^{\prime} \vert^{2} \log\frac{1}{\vert  z \vert} dA(z) \\
& = \sup \limits_{z_{0}\in\mathbb{D}} \iint \limits_{\mathbb{D}} \frac{1-\vert z_{0}\vert^{2}}{\vert 1-\overline{z_{0}}z\vert^{2}} \vert f^{\prime}(z) \vert^{2} \vert g(z) \vert^{2} \log\frac{1}{\vert  z \vert} dA(z) \\
& \geq \iint \limits_{\mathbb{D}} \frac{1-\vert \beta\vert^{2}}{\vert 1-\overline{\beta}z\vert^{2}} \vert f^{\prime}(z) \vert^{2} \vert g(z) \vert^{2} \log\frac{1}{\vert  z \vert} dA(z) \\
& \geq c^{2} \iint \limits_{G_{c}} \frac{1-\vert \beta\vert^{2}}{\vert 1-\overline{\beta}z\vert^{2}} \vert f^{\prime}(z) \vert^{2} \log\frac{1}{\vert  z \vert} dA(z) \\
& = c^{2} \iint \limits_{G_{c}} \vert h_{\beta}(z)\vert^{2} \log\frac{1}{\vert  z \vert} dA(z) \\
& \geq C \iint \limits_{G_{c}} \vert h_{\beta}(z)\vert^{2} (1 - \vert  z \vert^{2}) dA(z) \\
& \geq C \iint \limits_{\mathbb{D}} \vert h_{\beta}(z)\vert^{2} (1 - \vert  z \vert^{2}) dA(z),
\end{align*}
where the last inequality is justified by theorem \ref{Lue2}.
So 
\begin{align*}
\Vert S_{g}f \Vert_{BMOA_{0}}^{2} & \geq C \iint \limits_{\mathbb{D}} \frac{1-\vert \beta\vert^{2}}{\vert 1-\overline{\beta}z\vert^{2}} \vert f^{\prime}(z) \vert^{2} \log\frac{1}{\vert  z \vert} dA(z).
\end{align*}
Taking the supremum over $\beta\in\mathbb{D}$ in the last relation we get
\[
\Vert S_{g}f \Vert_{BMOA_{0}}^{2}  \geq C \Vert f \Vert_{BMOA_{0}}^{2}.
\]

$(i) \Rightarrow (ii)$ If $S_{g}$ has closed range then there exist $C_{1}>0$ such that for every $f \in BMOA_{0}$ we have
\[
\Vert  S_{g}f \Vert_{BMOA_{0}}^{2}  \geq C_{1} \Vert  f  \Vert_{BMOA_{0}}^{2}.
\]
For $\alpha\in\mathbb{D}$, if we set $f=\psi_{\alpha}-\alpha$ in the last inequality, just as in the case of Hardy spaces and observe that $\Vert\psi_{\alpha}-\alpha\Vert_{BMOA} \asymp 1$ and $\frac{(1-\vert\beta\vert^{2})(1-\vert z \vert^{2})}{\vert 1-\overline{\beta}z\vert^{2}}<1$, for every $z,\beta\in\mathbb{D}$, then we have
\begin{align*}
C_{1} & \leq \Vert  S_{g}(\psi_{\alpha}-\alpha) \Vert_{BMOA_{0}}^{2}\\
& = \sup \limits_{\beta\in\mathbb{D}} \iint \limits_{\mathbb{D}} \frac{1-\vert\beta\vert^{2}}{\vert 1-\overline{\beta}z\vert^{2}} \vert (S_{g}(\psi_{\alpha}-\alpha)(z))^{\prime} \vert^{2} \log\frac{1}{\vert  z \vert} dA(z)\\
& \leq C  \sup \limits_{\beta\in\mathbb{D}} \iint \limits_{\mathbb{D}} \frac{1-\vert\beta\vert^{2}}{\vert 1-\overline{\beta}z\vert^{2}} \vert \psi_{\alpha}^{\prime}(z) \vert^{2} \vert g(z) \vert^{2}  (1-\vert z \vert^{2}) dA(z)\\
&  \leq C \iint \limits_{\mathbb{D}}  \vert \psi_{\alpha}^{\prime}(z) \vert^{2} \vert g(z) \vert^{2}dA(z)\\
& \leq C\Big[\Vert g\Vert_{\infty}^{2}\iint \limits_{G_{c}\cap D_{\eta}(\alpha)}  \frac{(1-\vert\alpha \vert^{2})^{2}}{\vert 1-\overline{\alpha}z \vert^{4}} dA(z)
 + c^{2} \iint \limits_{D_{\eta}(\alpha)\setminus G_{c}}  \vert \psi_{\alpha}^{\prime}(z) \vert^{2} dA(z)\\ 
 & \hspace{75mm}+ \Vert g\Vert_{\infty}^{2} \iint \limits_{\mathbb{D}\setminus D_{\eta}(\alpha)}  \vert \psi_{\alpha}^{\prime}(z) dA(z)\Big]\\
 & \leq C\Big[\Vert g\Vert_{\infty}^{2} \iint \limits_{G_{c}\cap D_{\eta}(\alpha)} \frac{1}{(1-\vert\alpha \vert^{2})^{2}} dA(z) + c^{2} \iint \limits_{\mathbb{D}}  \vert \psi_{\alpha}^{\prime}(z) \vert^{2} dA(z)\\
 & \hspace{75mm}+ \Vert g\Vert_{\infty}^{2} \iint \limits_{\mathbb{D}\setminus D_{\eta}(\alpha)}\vert \psi_{\alpha}^{\prime}(z) \vert^{2} dA(z)\Big]\\
 & = C\Big[\Vert g\Vert_{\infty}^{2} \frac{A(G_{c}\cap D_{\eta}(\alpha))}{(1-\vert\alpha \vert^{2})^{2}}  + c^{2} \iint \limits_{\mathbb{D}} dA(w) + \Vert g\Vert_{\infty}^{2} \iint \limits_{\mathbb{D}\setminus D_{\eta}(0)} dA(w)\Big]\\
 & \leq C\Big[C^{\prime}\Vert g\Vert_{\infty}^{2} \frac{A(G_{c}\cap D_{\eta}(\alpha))}{A(D_{\eta}(\alpha))}  + c^{2} + \Vert g\Vert_{\infty}^{2}(1-\eta^{2})\Big],
\end{align*} 
where $C^{\prime}$ depends only on $\eta$ and $C$ is absolute. Therefore
\begin{align*}
  C_{1}  \leq C^{\prime}\Vert g \Vert_{\infty}^{2} \frac{A(G_{c}\cap D_{\eta}(\alpha))}{A(D_{\eta}(\alpha))} 
 + c^{2} + \Vert g \Vert_{\infty}^{2} (1-\eta^{2}).
\end{align*}

First, we choose $\eta$ close enough to 1 so that $\Vert g \Vert_{\infty}^{2} (1-\eta^{2})< \frac{C_{1}}{4}$ and $c$ small enough  so that $c< \frac{C_{1}}{4}$. So 
\[
A(G_{c}\cap D_{\eta}(\alpha))\geq \frac{C}{2\Vert g \Vert_{\infty}^{2}} A(D_{\eta}(\alpha))=\delta A(D_{\eta}(\alpha)),
\]
where $C$ depends only on $\eta$.
\end{proof}
\begin{remark} 
The $Q_{p}$ space, $0<p<\infty$, is defined as the set of all analytic functions $f$ in $\mathbb{D}$ for which
\[
\sup \limits_{\beta\in\mathbb{D}} \iint \limits_{\mathbb{D}} \frac{(1-\vert\beta\vert^{2})^{p}}{\vert 1-\overline{\beta}z\vert^{2p}} \vert f^{\prime}(z) \vert^{2} (1 -\vert  z \vert^{2})^{p} dA(z) <\infty.
\]
Let denote as $Q_{p,0}$ the space $Q_{p}/\mathbb{C}$. For $\beta,z\in\mathbb{D}$ and $f\in Q_{p,0}$, we consider the functions  $h_{\beta}(z) = \frac{(1-\vert\beta\vert^{2})^{\frac{p}{2}}}{(1-\overline{\beta}z)^{p}} f^{\prime}(z)$. It's easy to see that if $f\in Q_{p,0}$ then $h_{\beta}\in\mathbb{A}_{p}^{2}$ and  using similar arguments as in the proof of theorem \ref{integral_theorem_bmoa}, we can prove that  \eqref{theorem_rel} is also necessary and sufficient for $S_{g}$ to have closed range on $Q_{p,0}\hspace{1mm}(0<p<\infty)$.
\end{remark}
\section{Closed range integral operators on Besov spaces}
Let denote as $B_{0}^{p}$ the space $B^{p}/\mathbb{C}$.
With similar arguments as in the case of $BMOA$ space we can see that
the operator $S_{g}:B_{0}^{p} \rightarrow B_{0}^{p}$ $(1<p<\infty)$ is bounded if and only if $g\in H^{\infty}$. So, we consider $g\in H^{\infty}$ and $G_{c} = \lbrace z\in\mathbb{D}: \vert g(z)\vert>c\rbrace$. We will prove that condition \eqref{theorem_rel} is also necessary and sufficient for the operator $S_{g}$ to have closed range on $B_{0}^{p}$. For the sufficiency, we observe that, if $f\in B^{p}$ then $f^{\prime}\in\mathbb{A}_{p-2}^{p}$,  the weighted Bergman space defined in the previous section, so we can use theorem \ref{Lue2}. We have
\begin{align*}
\Vert S_{g}f \Vert_{B_{0}^{p}}^{p}& = \iint \limits_{\mathbb{D}} \vert (S_{g}f(z))^{\prime} \vert^{p} (1-\vert z \vert^{2})^{p-2} dA(z)\\
& \geq \iint \limits_{G_{c}} \vert f^{\prime}(z)\vert^{p}  \vert g(z)\vert^{p} (1-\vert z \vert^{2})^{p-2} dA(z) \\
& \geq c^{p} \iint \limits_{G_{c}} \vert f^{\prime}(z)\vert^{p} (1-\vert z \vert^{2})^{p-2} dA(z) \\
&\geq C \iint \limits_{\mathbb{D}} \vert f^{\prime}(z)\vert^{p} (1-\vert z \vert^{2})^{p-2} dA(z) \\
& = C \Vert f \Vert_{B_{0}^{p}}^{p},
\end{align*}
where the last inequality is justified by theorem \ref{Lue2}. So $S_{g}$ has closed range on $B_{0}^{p}$.

If $S_{g}$ has closed range on $B_{0}^{p}$ then there exist $C_{1}>0$ such that for every $f \in B_{0}^{p}$ we have
\[
\Vert  S_{g}f \Vert_{B_{0}^{p}}^{p}  \geq C_{1} \Vert  f  \Vert_{B_{0}^{p}}^{p}.
\]
For $\alpha\in\mathbb{D}$, if we set $f=f_{\alpha}=\frac{(1 - \vert \alpha  \vert^{2})^{\frac{2}{p}} }{\frac{2\overline{\alpha}}{p} (1 - \overline{\alpha}z)^{\frac{2}{p}}}-\frac{(1 - \vert \alpha  \vert^{2})^{\frac{2}{p}}}{\frac{2\overline{\alpha}}{p}}$ in the last inequality, just as in the case of $BMOA$, and observe that $\Vert f_{\alpha}\Vert_{B_{0}^{p}} \asymp 1$ and $\vert f_{\alpha}^{\prime}(z)\vert = \frac{(1 - \vert \alpha  \vert^{2})^{\frac{2}{p}} }{ \vert 1 - \overline{\alpha}z \vert^{{\frac{2}{p}}+1}}$, then we have
\begin{align*}
& C_{1}  \leq \Vert  S_{g}f_{\alpha} \Vert_{B_{0}^{p}}^{p} = \iint \limits_{\mathbb{D}} \vert f_{\alpha}^{\prime}(z)\vert^{p} \vert g(z) \vert^{p} (1-\vert z \vert)^{p-2} dA(z)\\
 &\leq \Vert g\Vert_{\infty}^{p}\iint \limits_{G_{c}\cap D_{\eta}(\alpha)} \frac{(1 - \vert \alpha  \vert^{2})^{2} }{ \vert 1 - \overline{\alpha}z \vert^{2+p}} (1-\vert z \vert)^{p-2}  dA(z)
 + c^{p} \iint \limits_{D_{\eta}(\alpha)\setminus G_{c}}  \vert f_{\alpha}^{\prime}(z)\vert^{p} (1-\vert z \vert)^{p-2} dA(z)\\ 
 & \hspace{60mm} + \Vert g\Vert_{\infty}^{p} \iint \limits_{\mathbb{D}\setminus D_{\eta}(\alpha)} \frac{(1 - \vert \alpha  \vert^{2})^{2} }{ \vert 1 - \overline{\alpha}z \vert^{2+p}} (1-\vert z \vert)^{p-2} dA(z)\\
&\leq \Vert g\Vert_{\infty}^{p}\iint \limits_{G_{c}\cap D_{\eta}(\alpha)} \frac{1}{(1-\vert \alpha \vert^{2})^{2}}   dA(z)
 + c^{p} \iint \limits_{\mathbb{D}}  \vert f_{\alpha}^{\prime}(z)\vert^{p} (1-\vert z \vert)^{p-2} dA(z)\\ 
 & \hspace{30mm} + \Vert g\Vert_{\infty}^{p} \iint \limits_{\mathbb{D}\setminus D_{\eta}(0)} \frac{(1 - \vert \alpha  \vert^{2})^{2} }{ \vert 1 - \overline{\alpha}\psi_{\alpha}(w) \vert^{2+p}} (1-\vert \psi_{\alpha}(w) \vert)^{p-2} \vert \psi_{\alpha}^{\prime}(w) \vert^{2} dA(w)\\
 &= \Vert g\Vert_{\infty}^{p}\iint \limits_{G_{c}\cap D_{\eta}(\alpha)} \frac{1}{(1-\vert \alpha \vert^{2})^{2}}   dA(z)
 + c^{p} \Vert f_{\alpha}\Vert_{B_{0}^{p}}^{p}  + \Vert g\Vert_{\infty}^{p} \iint \limits_{\mathbb{D}\setminus D_{\eta}(0)} \frac{(1 - \vert w  \vert^{2})^{p-2} }{ \vert 1 - \overline{\alpha}w \vert^{p-2}} dA(w)\\
&\leq  \Vert g\Vert_{\infty}^{p} \frac{A(G_{c}\cap D_{\eta}(\alpha))}{(1-\vert \alpha \vert^{2})^{2}} + c^{p} \Vert f_{\alpha}\Vert_{B_{0}^{p}}^{p} + \Vert g\Vert_{\infty}^{p} \iint \limits_{\mathbb{D}\setminus D_{\eta}(0)} dA(w)\\
& \leq C^{\prime}\Vert g\Vert_{\infty}^{p} \frac{A(G_{c}\cap D_{\eta}(\alpha))}{A(D_{\eta}(\alpha))}  + C c^{p} + \Vert g\Vert_{\infty}^{p}(1-\eta^{2}),
\end{align*}
where $C^{\prime}$ depends only on $\eta$ and $C$ is absolute.
 So we have
\begin{align*}
& C_{1}  \leq C^{\prime}\Vert g\Vert_{\infty}^{p}\frac{A(G_{c}\cap D_{\eta}(\alpha))}{A(D_{\eta}(\alpha))} + C c^{p} + \Vert g\Vert_{\infty}^{p} (1-\eta^{2}).
\end{align*}
Choosing $\eta$ close enough to 1 so that $\Vert g\Vert_{\infty}^{p} (1-\eta^{2})< \frac{C_{1}}{4}$, and $c$ small enough so that $C c^{p}< \frac{C_{1}}{4}$, we get 
\[
A(G_{c}\cap D_{\eta}(\alpha))\geq \frac{C_{1}}{2C^{\prime}\Vert g \Vert_{\infty}^{p}} A(D_{\eta}(\alpha))=\delta A(D_{\eta}(\alpha)).
\]
\section*{Acknowledgements}
Many thanks to Prof. Michael Papadimitrakis for discussions about the mathematical content of this paper. His contribution was essential in order for it to take its final form.

%
%


\begin{thebibliography}{9} 
	\bibitem{Anderson}
		{\sc Anderson, A.} 
		{\em Some Closed Range Integral Operators on Spaces of Analytic Functions,} 
		Integr. Equ. Oper. Theory 69, 87–99 (2011)
		 
	\bibitem{Luecking81}
		{\sc Luecking D.H.,}	
		{\em Inequalities on Bergman spaces,}
		Illinois J. Math. 25, 1–11 (1981)
	
	\bibitem{Pavlovic} 
		{\sc Pavlovic M.,} 
		{\em Function Classes on the Unit Disc,} 
		Walter de Gruyter GmbH, Berlin/Boston, (2014)

\end{thebibliography}


%
%

\end{document}